\newcommand{\Cov}[1]{\mathscr{#1}}
\def\HH{\Cov{H}}
\newtheorem{theorem}{Theorem}[section]
\newtheorem{proposition}[theorem]{Proposition}
\newtheorem{corollary}[theorem]{Corollary}
\newtheorem{lemma}[theorem]{Lemma}
\theoremstyle{definition}
\newtheorem{definition}{Definition}
\newenvironment{claim}
  {\pushQED{\qed}\claimx}
  {\popQED\endclaimx}
\begin{document}

\author{Yifan Jing}
\address{%
Department of Mathematics, University of Illinois at Urbana-Champaign, Urbana, IL 61801, USA.}
\email{yifanjing17@gmail.com.}
\author{Alexandr Kostochka}
\address{%
Department of Mathematics, University of Illinois at Urbana-Champaign, Urbana, IL 61801, USA, and Sobolev Institute of Mathematics, Novosibirsk 630090, Russia.}
\email{kostochk@math.uiuc.edu.}
\author{Fuhong Ma}
\address{%
School of Mathematics, Shandong University, Jinan 250100, China.}
\email{mafuhongsdnu@163.com.}
\author{Jingwei Xu}
\address{%
Department of Mathematics, University of Illinois at Urbana-Champaign, Urbana, IL 61801, USA.}
\email{jx6@illinois.edu.}

\thanks{A.K.~was partially supported by NSF grant DMS1600592, by grants 18-01-00353A and 19-01-00682 of the Russian Foundation for Basic Research and
 by Arnold O. Beckman Campus Research Board Award RB20003 of the University of Illinois at Urbana-Champaign.}
\thanks{F.M. is corresponding author.}
\thanks{J.X.~was partially supported   by Arnold O. Beckman Campus Research Board Award RB20003 of the University of Illinois at Urbana-Champaign.}

\title{Defective DP-colorings of sparse simple graphs}

\maketitle

\begin{abstract}
DP-coloring (also known as correspondence coloring) is a generalization of list coloring developed recently by Dvo\v r\' ak and Postle. We introduce and study $(i,j)$-defective DP-colorings of simple graphs. Let $g_{DP}(i,j,n)$ be the minimum number of edges in an $n$-vertex DP-$(i,j)$-critical graph. In this paper we determine  sharp bound on $g_{DP}(i,j,n)$ for each $i\geq3$ and $j\geq 2i+1$  for infinitely many $n$.
\\
\\
 {\small{\em Mathematics Subject Classification}: 05C15, 05C35.}\\
 {\small{\em Key words and phrases}:  Defective Coloring,  List Coloring, DP-coloring.}
\end{abstract}

\section{Introduction}
\subsection{Defective Coloring}

A \emph{proper $k$-coloring} of a graph $G$ is a partition of $V(G)$ into $k$ independent sets $V_1,\dots,V_k$.
A \emph{$(d_1, \dots, d_k)$-defective coloring} (or simply  {\em $(d_1, \dots, d_k)$-coloring}) of a graph $G$ is a partition of $V(G)$ into sets $V_1,V_2,\dots,V_k$
such that for every $i\in[k]$, every vertex in $V_i$ has at most $d_i$ neighbors in $V_i$. The ordinary proper $k$-coloring is a partial case of such
coloring, namely it
is a $(0,0,\ldots,0)$-defective coloring. A significant amount of interesting papers were devoted to
  defective colorings of graphs, see e.g.~\cite{Ar1,CCW1,DKMR,EKKOS,EMOP,HW18,KYu,LL66,OOW,VW}.

 For every $(i,j)\neq (0,0)$, it is an NP-complete problem
 to decide whether
 a graph $G$ has an $(i,j)$-coloring.  Even the problem of checking whether a given planar graph of girth $9$ has a $(0,1)$-coloring is NP-complete;
 this was showed by
 Esperet,
Montassier, Ochem, and Pinlou~\cite{EMOP}.
Since the parameter is NP-complete, a number of papers considered how sparse can be graphs with no $(i,j)$-coloring for given $i$ and $j$; the reader may look at~\cite{BIMOR10,BIMR11,BIMR12,BK11,BK14,BKY13,KKZ14,KKZ15}. Among the measures of how ``sparse" is a graph, one of the most used
 is the {\em maximum average degree},
$mad(G)=\max_{G'\subseteq G}\frac{2|E(G')|}{|V(G')|}$. In this paper we restrict ourselves to coloring with 2 colors.
A very useful notion in the studies of defective colorings with two colors is that of
 $(i,j)$-{\em critical graphs} which are
the graphs that do not have $(i,j)$-coloring but every proper subgraph of which has such a coloring. Let
$f(i,j,n)$  denote the minimum number of edges in an $(i,j)$-critical $n$-vertex graph. One simple example is that $f(0,0,n)=n$ for odd $n\geq 3$:
the $n$-cycle is not bipartite, but every graph with fewer than $n$ edges has a vertex of degree at most $1$ and hence cannot be $(0,0)$-critical.
 The  papers cited above showed several interesting bounds on $f(i,j,n)$. For example, they contain lower bounds  that are exact for infinitely many $n$
 in the cases when   $j\geq 2i+2$ and when $(i,j)\in \{(0,1),(1,1)\}$.

\subsection{Defective List Coloring}

Recall that a \emph{list} for a graph $G$ is a function $L: V(G)\rightarrow \mathcal{P}(\mathbb{N})$ that assigns to each $v\in V(G)$ a set $L(v)$.
A list $L$ is an \emph{$\ell$-list } if $|L(v)|=\ell$ for every $v\in V(G)$.
 An \emph{$L$-coloring} of $G$ is a function $\phi : V(G) \to \bigcup_{v\in V(G)} L(v)$ such that $\phi(v)\in L(v)$ for every $v\in V(G)$ and $\phi(u)\neq \phi(v)$ whenever $uv\in E(G)$. A graph $G$ is \emph{$k$-choosable} if $G$ has an $L$-coloring for every $k$-list assignment $L$. The following  notion
 was introduced in~\cite{EH1,S1999} and studied in~\cite{S2000,Woodall1,HS06,HW18}: A {\em $d$-defective list $L$-coloring}
of $G$ is a function $\phi : V(G) \to \bigcup_{v\in V(G)} L(v)$ such that $\phi(v)\in L(v)$ for every $v\in V(G)$ and every vertex has at most
$d$ neighbors of the same color. If $G$ has a $d$-defective list $L$-coloring from every $k$-list assignment $L$, then it is called
{\em $d$-defective $k$-choosable}.
As in the case of ordinary coloring, a direction of study is showing that ``sparse" graphs are $d$-defective $k$-choosable.
As mentioned before, in this paper we consider only $k=2$. The best known bounds on maximum average degree that guarantee that a
graph is $d$-defective $2$-choosable are due to Havet and Sereni~\cite{HS06} (a new proof of the lower bound is due to Hendrey and Wood~\cite{HW18}):

\bigskip
\noindent{\bf Theorem A} (\cite{HS06}){\bf.} {\em For every $d\geq 0$, if $mad(G)<\frac{4d+4}{d+2}$, then $G$ is $d$-defective $2$-choosable.
On the other hand, for every $\epsilon>0$, there is a graph $G_\epsilon$ with $mad(G_\epsilon)<4+\epsilon-\frac{ 2d+4}{d^2+2d+2}$ that is
not $(d,d)$-colorable.}

\bigskip

\subsection{Defective DP-Coloring}

 Dvo\v r\'ak and Postle \cite{DP18} introduced and studied  DP-coloring which generalizes list coloring. This notion was extended to multigraphs by Bernshteyn, Kostochka and Pron~\cite{BKP}.


\begin{definition}\label{defn:cover}
		Let $G$ be a multigraph. A \emph{cover} of $G$ is a pair $\HH=(L, H)$, consisting of a graph $H$ (called the \emph{cover graph} of $G$) and a function $L \colon V(G) \to 2^{V(H)}$, satisfying the following requirements:
		\begin{enumerate}
			\item the family of sets $\{L(u) \,:\,u \in V(G)\}$ forms a partition of $V(H)$;
			\item for every $u \in V(G)$, the graph $H[L(u)]$ is complete;
			\item if $E(H[L(u), L(v)]) \neq \varnothing$, then either $u = v$ or $uv \in E(G)$;
				\item \label{item:matching} if the multiplicity of an edge $uv \in E(G)$ is $k$, then $H[L(u), L(v)]$ is the
union  of at most $k$  matchings connecting $L(u)$ with $L(v)$.
		\end{enumerate}
		A cover $(L, H)$ of $G$ is \emph{$k$-fold} if $|L(u)| = k$ for every $u \in V(G)$.
	\end{definition}

Throughout this paper, we consider only $2$-fold covers.
				
	 \begin{definition}
		Let $G$ be a multigraph and $\HH=(L, H)$ be a cover of $G$. An \emph{$\HH$-map} is
	an injection $\phi: V(G)\rightarrow V(H)$, such that $\phi(v)\in L(v)$ for every $v\in V(G)$. The subgraph of $H$ induced by $\phi(V(G))$ is called the \emph{$\phi$-induced cover graph}, denoted by $H_{\phi}$.
	 \end{definition}
	
   \begin{definition}\label{ijcolor} Let $\HH=(L, H)$ be a cover of $G$. For $u \in V(G)$, let $L(u)=\{p(u), r(u)\}$, where $p(u)$ and $r(u)$ are called the \emph{poor} and the \emph{rich} vertices, respectively.	Given $i, j\geq 0$ and $i\leq j$.  An $\HH$-map $\phi$ is an \emph{$(i, j)$-defective-$\HH$-coloring of $G$} if the degree of every poor vertex in $H_\phi$ is at most $i$, and the degree of every rich vertex in $H_\phi$ is at most $j$.
   \end{definition}

	\begin{definition}\label{D-S-1} A multigraph $G$ is \emph{$(i,j)$-defective-DP-colorable} if for every $2$-fold cover $\HH=(L,H)$ of $G$, there exists an $(i, j)$-defective-$\HH$-coloring. We say $G$ is \emph{$(i,j)$-defective-DP-critical}, if $G$ is not $(i,j)$-defective-DP-colorable, but every proper subgraph of $G$ is.
\end{definition}

  If  $uv\in E(G)$ and  in a $2$-fold cover $\HH=(L,H)$ of  $G$  some vertex
 $\alpha\in L(u)$ has no neighbors in $L(v)$, then also some $\beta\in L(v)$ has no neighbors in $L(u)$. In this case,  adding $\alpha\beta$
to $H$ makes it only harder to find an $(i, j)$-defective-$\HH$-coloring of $G$. Thus, below we consider only {\em full} $2$-fold covers, i.e. the covers
$\HH=(L,H)$ of $G$ such that for every edge $e$ connecting $u$ with $v$ in $G$, the matching in $\HH=(L,H)$ corresponding to $e$ consists of two edges.

   For brevity, in the rest of the paper, we call an $(i, j)$-defective-$\HH$-coloring simply by an \emph{$(i,j,\HH$)-coloring} (or `$\HH$-coloring', if
   $i$ and $j$ are clear from the context). Similarly, instead of ``$(i, j)$-defective-DP-colorable" and ``$(i,j)$-defective-DP-critical'' we will say  ``$(i,j)$-colorable" and ``$(i,j)$-critical".

  Denote the minimum number of edges in an $n$-vertex \emph{$(i,j)$-critical multigraph} by $f_{DP}(i,j,n)$, and
 the minimum number of edges in an $n$-vertex \emph{$(i,j)$-critical simple graph} by $g_{DP}(i,j,n)$. By definition,
 $  f_{DP}(i,j,n)\leq g_{DP}(i,j,n)$.
  Recently~\cite{JKMSX}, linear lower bounds on $f_{DP}(i,j,n)$ were proved that are exact for  infinitely many $n$
  for every choice of $i\leq j$.
\bigskip

\noindent{\bf Theorem B} (\cite{JKMSX}){\bf.} {\em
\begin{enumerate}
        \item If $i = 0$ and $j\geq 1$, then $f_{DP}(0,j,n) \geq n+j$. This is sharp for every $j\geq 1$       and every $n\geq 2j+2$.
        \item If $i\geq1$ and $j\geq 2i+1$, then $f_{DP}(i,j,n)\geq \frac{(2i+1)n-(2i-j)}{i+1}$. This is sharp for each such pair $(i,j)$ for infinitely many $n$.
        \item If $i\geq1$ and $i+2\leq j\leq 2i$, then $f_{DP}(i,j,n)\geq \frac{2jn+2}{j+1}$. This is sharp for each such pair $(i,j)$ for infinitely many $n$.
        \item If $i\geq1$, then $f_{DP}(i,i+1,n)\geq \frac{(2i^2+4i+1)n+1}{i^2+3i+1}$. This is sharp for each  $i\geq 1$ for infinitely many $n$.
        \item If $i\geq1$,  then $f_{DP}(i,i,n)\geq \frac{(2i+2)n}{i+2}$. This is sharp for each $i\geq 1$ for infinitely many $n$.
    \end{enumerate}
The bound in Part (1) is also sharp for simple graphs. }

\bigskip
For $i>0$ we do not know simple graphs for which the bounds of Theorem~B are sharp. In fact, we think that $g_{DP}(i,j,n)>f_{DP}(i,j,n)$ for $i>0$.
It follows from~\cite{KX} that $g_{DP}(1,1,n)>f_{DP}(1,1,n)$ and $g_{DP}(2,2,n)>f_{DP}(2,2,n)$.
The goal of this paper is to find a lower bound on $g_{DP}(i,j,n)$ for $i\geq 3$ and $j\geq 2j+1$ that is exact for infinitely many $n$ for each such pair $(i,j)$.
It differs from the bound of Theorem B(2) but only by $1$.

\section{Results}

The goal of this paper is to prove the following extremal result.

\begin{theorem}\label{MT-f}
Let $i \geq 3$, $j \geq 2i+1$ be positive integers, and let $G$ be an $(i,j)$-critical simple graph.
Then $$g_{DP}(i,j,n) \geq \frac{(2i+1)n+j-i+1}{i+1}.$$
This is sharp for each such pair $(i,j)$ for infinitely many $n$.
\end{theorem}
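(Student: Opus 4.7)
The plan is to prove the lower bound by adapting the discharging framework of Theorem~B(2) so as to exploit the absence of multi-edges, and then to verify sharpness with an explicit construction of a family of $(i,j)$-critical simple graphs. Let $G$ be an $(i,j)$-critical simple graph on $n$ vertices and $e$ edges, and define the potential
$$\rho(G)=(2i+1)|V(G)|-(i+1)|E(G)|.$$
The desired bound $(i+1)e\ge(2i+1)n+j-i+1$ is equivalent to $\rho(G)\le i-j-1$. Since Theorem~B(2) already yields $\rho(G)\le 2i-j$ for multigraphs, the task is to save an additional $i+1$, i.e.\ the weight of one edge.

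First I would collect extension lemmas: for each proper subgraph $G'\subsetneq G$ and each $2$-fold cover $\HH$ of $G$, the restriction of $\HH$ to $G'$ admits an $(i,j,\HH)$-coloring of $G'$, and the obstruction to lifting it to $G$ is a small, controlled local configuration near $V(G)\setminus V(G')$. These rule out certain reducible substructures, namely vertices of too-small degree and pairs of adjacent low-degree vertices with restricted neighbour-degree profiles. A careful accounting of the "poor" and "rich" choices at a deleted vertex $v$, together with the freedom in choosing an $\HH$-coloring of $G-v$, should force sufficiently many such local restrictions.

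With the reducible configurations in hand, I would assign each vertex the initial charge $\mu(v)=(2i+1)-\tfrac{(i+1)\,d(v)}{2}$ so that $\sum_v\mu(v)=\rho(G)$, and redistribute charge along edges via rules tailored to these configurations, parallel to the discharging in Theorem~B(2). That argument leaves each vertex with non-negative final charge and a global total bounded by $2i-j$. I would then isolate the unique equality configuration, which appears to involve an edge of multiplicity at least $2$ between two low-degree vertices, and observe that the simplicity of $G$ forbids it. The resulting slack redistributes uniformly to yield the extra $i+1$ needed in $\rho(G)$.

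For sharpness, I would start from the extremal multigraph family witnessing Theorem~B(2), most likely obtained by gluing copies of a basic $(i,j)$-critical gadget along shared vertices with exactly one multi-edge per gadget, and then modify it by replacing the problematic multi-edge with a single additional simple edge (or subdividing it through a small auxiliary structure) to obtain a simple $(i,j)$-critical graph with exactly $\frac{(2i+1)n+j-i+1}{i+1}$ edges. Criticality must be verified by exhibiting a bad cover on the perturbed graph and a good cover on each edge-deleted subgraph. The principal difficulty is identifying and quantifying the tight configuration in the Theorem~B(2) proof that is blocked in the simple setting: one must check that the improvement is precisely $i+1$, neither more nor less, so that the lower bound matches the construction, which calls for a careful case analysis around the low-degree vertices and their neighbourhoods, and likely explains the hypothesis $i\ge 3$ by ensuring enough room in the degree conditions for the extension arguments to bite.
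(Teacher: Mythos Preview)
Your proposal is not a proof but a speculative plan, and several of its load-bearing steps are unjustified. You assert that the equality case in the multigraph bound of Theorem~B(2) ``appears to involve an edge of multiplicity at least $2$'', and that forbidding this saves exactly $i+1$ in potential; but you do not identify this configuration, do not verify that it is unique, and do not show that simplicity rules it out in a way that propagates through a discharging argument. Likewise the construction for sharpness is described only as ``modify the multigraph family'' without an explicit graph or a verification of criticality. As written, nothing here could be checked.

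The paper's approach is quite different from what you outline. It does not use discharging at all. Instead it works in a more general capacity framework (each vertex $v$ carries a pair ${\bf c}(v)=({\bf c}_1(v),{\bf c}_2(v))$ bounding the allowed defect at $p(v)$ and $r(v)$), with potential $\rho_{\bf c}(v)=i-j+1+{\bf c}_1(v)+{\bf c}_2(v)$, and argues by minimal counterexample. A sequence of structural lemmas pins down sets of low potential, shows that the set $Y$ of $(2;i,j)$-vertices and its complement $X$ are both independent, and then passes to an auxiliary multigraph $\widehat{G}$ on $X$ whose edges are the vertices of $Y$. The core of the argument is an orientation/Hall-type analysis of $\widehat{G}$: one picks an orientation $g_0$ minimising $\sum_x\max\{0,\lambda(x,g_0)-{\bf c}_2(x)\}$ and studies the ``overflow'' set $S_{g_0}$ and refinements $R_{g_0},T_{g_0}$, using parity (even/odd $Y$-vertices with respect to the cover) to derive a contradiction. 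Simplicity of $G$ enters repeatedly (e.g.\ when forming auxiliary graphs $G'$ with a new vertex $v^*$), and the hypothesis $i\ge 3$ is used in the coloring arguments for $Y$-vertices. For sharpness the paper builds explicit simple graphs $G_m$ from a path $v_1\cdots v_m$ with prescribed numbers of ``flag'' gadgets attached at each $v_t$, and verifies both non-colorability (via a specific cover) and criticality directly. None of this is captured by your outline.
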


Since every non-$(i,j)$-colorable graph contains an $(i, j)$-critical subgraph, Theorem~\ref{MT-f} yields the following.

\begin{corollary}
Let $G$ be a simple graph. If $i\geq3$ and $j\geq 2i+1$ and for every subgraph $H$ of $G$, $|E(H)| \leq \frac{(2i+1)|V(H)|+j-i+1}{i+1},$ then $G$ is $(i,j)$-colorable. This is sharp.
\end{corollary}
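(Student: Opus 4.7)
The plan is to adapt the discharging framework from \cite{JKMSX} (used to prove Theorem~B), gaining the extra $\tfrac{i+1}{i+1}=1$ edge by exploiting simpleness. Assume for contradiction that $G$ is a simple $(i,j)$-critical graph on $n$ vertices with $(i+1)|E(G)| \leq (2i+1)n + (j-i)$, and assign each vertex the initial charge $\mu(v) := (i+1)d(v) - 2(2i+1)$. Then
\[
\sum_{v \in V(G)} \mu(v) \;=\; 2(i+1)|E(G)| - 2(2i+1)n \;\leq\; 2(j-i),
\]
so it will suffice to redistribute charges and force $\sum_{v}\mu(v) \geq 2(j-i+1)$, a contradiction.

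The first stage is to catalogue the reducible configurations in $G$ via criticality. For each $2$-fold cover $\HH=(L,H)$ of $G$ and each proper subgraph $G' \subsetneq G$ (typically $G'=G-v$ or $G'=G-e$), criticality provides an $(i,j,\HH|_{G'})$-coloring $\phi'$. One shows that the presence in $G$ of certain light substructures (small-degree vertices, specific neighborhood patterns, short chains of low-degree vertices, etc.) would allow $\phi'$ to be extended to an $(i,j,\HH)$-coloring of $G$, contradicting non-colorability. The key leverage is the poor/rich asymmetry of Definition~\ref{ijcolor} together with $j\ge 2i+1$: whenever one of $p(v),r(v)$ is blocked on the recolored vertex $v$, the other has enough slack to absorb all neighbors. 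The analogous lemmas for multigraphs in \cite{JKMSX} already suffice for the bound of Theorem~B(2); in the simple case one further configuration becomes reducible, namely one whose multigraph analogue was only ``paid for'' by a multi-edge in the cover $H$. This extra forbidden configuration will be the source of the $+1$ improvement.

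With the reducible-configuration list in hand, the second stage designs discharging rules transferring charge from vertices of degree $\geq i+2$ toward vertices of degree $i+1$, which are the minimum-charge carriers (for example $\mu=i^2-2i-1$ when $d(v)=i+1$). The verification is a case analysis on the degree profile in each neighborhood, and the main obstacle will be to show that the new forbidden configuration propagates through the discharging so as to yield an extra global surplus of $2(i+1)$ beyond the multigraph analysis of \cite{JKMSX}; pinpointing where exactly this surplus is accumulated is the technical heart of the proof. Finally, sharpness is witnessed by an explicit construction: starting from the multigraph extremal family underlying Theorem~B(2), we replace a single multi-edge by a short simple gadget using exactly one additional edge while preserving $(i,j)$-criticality, producing simple $(i,j)$-critical graphs that meet our bound with equality for infinitely many $n$.
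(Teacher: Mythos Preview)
In the paper this corollary is a one-line consequence of Theorem~\ref{MT-f}: if $G$ were not $(i,j)$-colorable it would contain an $(i,j)$-critical subgraph $H$, and Theorem~\ref{MT-f} forces $|E(H)|$ to exceed the hypothesised bound. So what you are really sketching is an alternative proof of Theorem~\ref{MT-f} itself, not of the corollary.

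Your route is genuinely different from the paper's. The paper does \emph{not} run a discharging argument; it passes to the more general capacity-weighted setting of Definitions~\ref{M-D-1}--\ref{M-D-2} and proves Theorem~\ref{MT-F} about the potential $\rho(G,{\bf c})$. The engine is not a list of reducible configurations plus local redistribution rules, but rather (i) a ``gap'' lemma (Lemma~\ref{LM-M-3}) showing that any proper subset of potential $\le i-j+1$ must miss exactly one $(2;i,j)$-vertex, (ii) the resulting bipartite structure between the $(2;i,j)$-vertices $Y$ and the rest $X$, and (iii) an orientation argument on the auxiliary multigraph $\widehat G$ (Lemmas~\ref{lem:gluing}--\ref{notreallynew-lm5}) that exploits the even/odd dichotomy of $Y$-vertices with respect to the cover. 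The extra ``$+1$'' over Theorem~B(2) is extracted precisely in this orientation phase via the inequality~(\ref{eq:key}), not through a local discharging surplus. Sharpness comes from the flag gadgets of Section~5, which are not obtained by replacing a single multi-edge in the multigraph examples.

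That said, your proposal as written is a plan, not a proof, and it defers exactly the places where work is required. You never name the ``one further configuration'' that becomes reducible in the simple case, you give no discharging rules, and you explicitly flag that ``pinpointing where exactly this surplus is accumulated is the technical heart of the proof'' --- i.e., the heart is missing. A discharging proof in this spirit may well exist (the paper itself contains commented-out remnants of one), but until you specify the forbidden configuration and the rules and verify that the global surplus really jumps by at least $2$, there is nothing to check. Your sharpness sketch is similarly unsubstantiated: replacing one multi-edge by a ``short simple gadget using exactly one additional edge'' does not obviously preserve $(i,j)$-criticality, and the paper's actual constructions (the graphs $G_m$) look nothing like this.
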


In the next section we introduce a more general framework to prove the lower bound of Theorem~\ref{MT-f}.
The lower bound of Theorem~\ref{MT-f} will be proved in Section~4. In the last section, we present constructions showing that our bounds are sharp for each $i\leq j$ for infinitely many $n$.

\section{A More General Setting}

We need the following more general
 framework. Instead of $(i,j)$-colorings of a cover $(L,H)$ of a graph $G$, we will consider $(L,H)$-maps $\phi$ with variable restrictions
 on the `allowed' degrees of the vertices in $H_\phi$.

\begin{definition}[Capacity]\label{M-D-1}
A \emph{capacity function} on $G$ is a map ${\bf c}: V(G)\rightarrow \{-1,0,\dots,i \}\times \{-1,0,\dots,j\}$. For $u\in V(G)$, denote ${\bf c}(u)$ by $({\bf c}_1(u), {\bf c}_2(u))$.
We call such pair $(G,{\bf c})$ a \emph{weighted pair}.
\end{definition}

 Below, let $(G,{\bf c})$ be a weighted pair, and $\HH=(L,H)$ be a cover of $G$.

\begin{definition}[A ${\bf c}$-coloring]\label{ijc}
A \emph{$({\bf c},\HH$)-coloring of $G$} is an $\HH$-map $\phi$ such that for each $u\in V(G)$, the degree of $p(u)$ in $H_{\phi}$ is at most ${\bf c}_1(u)$, and that of $r(u)$ is at most ${\bf c}_2(u)$.
We call ${\bf c}_1(u)$ the \emph{capacity} of $p(u)$  and ${\bf c}_2(u)$ the \emph{capacity} of $r(u)$.
If the capacity of some $v$ in $V(H)$ is $-1$, then $v$ is not allowed in the image of any $({\bf c},\HH$)-coloring of $G$. If for every cover $\HH$ of $G$, there is a $({\bf c},\HH$)-coloring, we say that $G$ is {\em ${\bf c}$-colorable}.
\end{definition}

If ${\bf c}(v) = (i,j)$ for all $v\in V(G)$, then any $( {\bf c},\HH$)-coloring of $G$ is an  $(i,j,\HH$)-coloring in the  sense of Definition~\ref{ijcolor}. So, Definition~\ref{ijc} is
a refinement of Definition~\ref{ijcolor}. Similarly, we say that $G$ is ${\bf c}$-{\em critical} if $G$ is not ${\bf c}$-colorable, but every proper subgraph of $G$ is.
For every vertex $x$ in the cover graph, we slightly abuse the notation of ${\bf c}$ and denote the capacity of $x$ by ${\bf c}(x)$.



\begin{definition}\label{M-D-2} For a vertex $u \in V(G)$, the \emph{$(i,j, {\bf c})$-potential} of $u$ is $$\rho_{{\bf c}}(u):= i-j+1+{\bf c}_1(u)+{\bf c}_2(u).$$
The $(i,j, {\bf c})$-potential of a subgraph $G'$ of $G$ is
\begin{equation}\label{eq-1}
\rho_{G,{\bf c}}(G') := \sum_{u \in V(G')}\rho_{{\bf c}}(u)-(i+1)|E(G')|.
\end{equation}
For a subset $S\subseteq V(G)$, the $(i,j, {\bf c})$-potential of $S$, $\rho_{G,{\bf c}}(S)$, is the $(i,j, {\bf c})$-potential of $G[S]$.
The $(i,j, {\bf c})$-potential of $(G,{\bf c})$ is defined by $\rho(G,{\bf c}) := \min_{S\subseteq V(G)}\rho_{G,{\bf c}}(S)$.
\end{definition}
When clear from the text, we call the $(i,j, {\bf c})$-potential simply by potential.

Observe that the potential function is submodular:
\begin{lemma}\label{lem:submod}
For all $A,B\subseteq V(G)$,
\begin{equation}\label{eq:submod}
    \rho_{G,{\bf c}}(A)+\rho_{G,{\bf c}}(B)=\rho_{G,{\bf c}}(A\cup B)+\rho_{G,{\bf c}}(A\cap B)+(i+1)|E(A\setminus B,B\setminus A)|.
\end{equation}
\end{lemma}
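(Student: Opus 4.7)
The plan is to verify the identity by a direct computation, separating the vertex contributions from the edge contributions in the definition
$\rho_{G,{\bf c}}(S) = \sum_{u\in S}\rho_{\bf c}(u) - (i+1)|E(G[S])|$.

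For the vertex part, the claim reduces to the standard inclusion--exclusion identity
\[
\sum_{u\in A}\rho_{\bf c}(u) + \sum_{u\in B}\rho_{\bf c}(u) = \sum_{u\in A\cup B}\rho_{\bf c}(u) + \sum_{u\in A\cap B}\rho_{\bf c}(u),
\]
valid for any real-valued function on $V(G)$. So the work is entirely on the edge side, where I need to show
\[
|E(G[A])| + |E(G[B])| = |E(G[A\cup B])| + |E(G[A\cap B])| - |E(A\setminus B, B\setminus A)|,
\]
since plugging this in and multiplying by $-(i+1)$ produces exactly the cross-edge correction term on the right side of~\eqref{eq:submod}.

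To establish the edge identity, I would partition $E(G[A\cup B])$ according to which of the three blocks $A\setminus B$, $A\cap B$, $B\setminus A$ each endpoint lies in. This yields six classes of edges: those inside each of the three blocks, and those between each of the three pairs of blocks. Expressing $|E(G[A])|$, $|E(G[B])|$, $|E(G[A\cup B])|$, and $|E(G[A\cap B])|$ in terms of the cardinalities of these six classes, the two sides of the displayed identity reduce to the same linear combination (in particular, edges inside $A\cap B$ are counted twice on each side, edges inside $A\setminus B$ or $B\setminus A$ once, edges between $A\cap B$ and either difference once, while the $A\setminus B$ to $B\setminus A$ edges appear once on the right via $|E(G[A\cup B])|$ and are cancelled by the subtracted term). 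Combining this with the vertex identity finishes the proof.

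There is no real obstacle here: the lemma is a bookkeeping statement and the only thing to be careful about is that simple-graph edges between $A\setminus B$ and $B\setminus A$ are the unique class that contributes to $G[A\cup B]$ without contributing to either $G[A]$ or $G[B]$, which is exactly why the correction term has the form $(i+1)|E(A\setminus B, B\setminus A)|$.
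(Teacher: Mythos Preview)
Your proof is correct and essentially the same bookkeeping verification as the paper's. The only cosmetic difference is that the paper decomposes $\rho(A)$ and $\rho(B)$ each into a sum over the two disjoint pieces $A\setminus B$ and $A\cap B$ (respectively $B\setminus A$ and $A\cap B$) and then recombines, whereas you separate the vertex and edge contributions first and carry out the six-class edge partition explicitly; both arrive at the same identity by the same counting.
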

\begin{proof}
Since $G$ and ${\bf c}$ are fixed, we omit the subscripts in the proof. By definition,
\begin{align*}
    &\rho(A)=\rho(A\setminus B)+\rho(A\cap B)-(i+1)|E(A\setminus B,A\cap B)|,\\
    &\rho(B)=\rho(B\setminus A)+\rho(A\cap B)-(i+1)|E(B\setminus A,A\cap B)|.
\end{align*}
Hence
\begin{align*}
    \rho(A)+\rho(B)=\rho(A\cap B)+\rho(A\cup B)+(i+1)|E(A\setminus B,B\setminus A)|\geq \rho(A\cap B)+\rho(A\cup B).
\end{align*}
\end{proof}

The following theorem implies the lower bound in Theorem \ref{MT-f}.


\begin{theorem}\label{MT-F}
Let $i \geq 3$, $j \geq 2i+1$ be positive integers, and $(G,{\bf c})$ be a weighted pair such that $G$ is  ${\bf c}$-critical, then $\rho(G,{\bf c}) \leq i-j-1$.
\end{theorem}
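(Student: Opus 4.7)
Suppose the theorem fails and let $(G,{\bf c})$ be a counterexample minimizing $|V(G)|+|E(G)|$, so $G$ is ${\bf c}$-critical and $\rho(G,{\bf c})\geq i-j$. By criticality, fix a $2$-fold cover $\HH=(L,H)$ of $G$ admitting no $({\bf c},\HH)$-coloring. I would first do a routine cleanup of ${\bf c}$: a handful of case checks let one assume $0\leq {\bf c}_1(v)\leq i$ and $0\leq {\bf c}_2(v)\leq j$ at every vertex and that $G$ has no isolated vertex, since otherwise one reduces to a strictly smaller ${\bf c}$-critical pair with the same or larger potential, contradicting minimality.

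The core tool is the following vertex-reduction. For $v\in V(G)$ and $\alpha\in L(v)$, define $(G-v,{\bf c}'_\alpha)$ by deleting $v$ and decreasing by $1$ the capacity of the endpoint in each $L(u)$ matched to $\alpha$ in $H$. A direct computation gives
\[
\rho_{G-v,{\bf c}'_\alpha}(S)=\rho_{G,{\bf c}}(S)-|N(v)\cap S|\qquad \text{for every } S\subseteq V(G)\setminus\{v\}.
\]
Suppose $\rho(G-v,{\bf c}'_\alpha)\geq i-j$. Then $(G-v,{\bf c}'_\alpha)$ must be ${\bf c}'_\alpha$-colorable: otherwise it contains a ${\bf c}'_\alpha$-critical subgraph which, by minimality of $(G,{\bf c})$, has potential at most $i-j-1$, and the vertex set witnessing this also witnesses potential $\leq i-j-1$ in $(G-v,{\bf c}'_\alpha)$ (deleting edges does not decrease the potential of any set), contradicting $\rho(G-v,{\bf c}'_\alpha)\geq i-j$. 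Such a coloring then extends to a $({\bf c},\HH)$-coloring of $G$ by setting $\phi(v)=\alpha$, contradicting the choice of $\HH$. Hence for every $v\in V(G)$ and every $\alpha\in L(v)$ there is a nonempty $S_{v,\alpha}\subseteq V(G)\setminus\{v\}$ with
\[
\rho_{G,{\bf c}}(S_{v,\alpha})\leq i-j-1+|N(v)\cap S_{v,\alpha}|,
\]
and in particular $|N(v)\cap S_{v,\alpha}|\geq 1$.

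To combine these inequalities I would invoke Lemma \ref{lem:submod}. Since the right-hand side of \eqref{eq:submod} is at least $\rho(A\cup B)+\rho(A\cap B)$, the collection of sets attaining a given potential is well behaved under union and intersection; in particular there is a canonical minimum-potential subset $S^*$ attaining $\rho(G,{\bf c})\geq i-j$. For $v\notin S^*$, the inequality above forces $v$ to have a neighbor in $S^*$. Exploiting the asymmetry $j\geq 2i+1$—which gives rich positions a capacity surplus of at least $i+1$ over poor positions—one can strategically choose $\alpha\in L(v)$ so that the capacity reductions land on the rich endpoints of vertices in $N(v)\cap S^*$, where capacity is abundant; this strongly restricts how many neighbors of $v$ can lie in $S^*$ and hence how dense $G[S^*]$ can be.

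The main obstacle is extracting the additional $+1$ improvement over Theorem~B, which is where the simple-graph hypothesis is decisive. In a multigraph a pair $uv$ may carry up to two parallel matchings between $L(u)$ and $L(v)$ in $H$, so a single color choice at $v$ may reduce the capacities at both $p(u)$ and $r(u)$; in a simple graph each edge contributes a single matching, so each color choice at $v$ reduces exactly one of the two capacities at $u$, furnishing one unit of slack per incident edge that is absent in the multigraph case. I would close with a tailored double count summing the vertex-reduction inequalities over a well-chosen family of pairs $(v,\alpha)$, in which this per-edge slack accumulates to the single extra unit of potential needed to contradict $\rho(G,{\bf c})\geq i-j$.
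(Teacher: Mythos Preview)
Your vertex-reduction step has a genuine gap: after coloring $G-v$ with respect to ${\bf c}'_\alpha$ and setting $\phi(v)=\alpha$, you have indeed controlled the degree of $\phi(u)$ for each $u\in N(v)$, but you have \emph{not} controlled the degree of $\alpha$ itself in $H_\phi$. That degree equals $|\{u\in N(v):\phi(u)\text{ is matched to }\alpha\}|$, which may be as large as $d_G(v)$, and nothing in the reduction forces it to be at most ${\bf c}(\alpha)$. So the extension need not yield a $({\bf c},\HH)$-coloring, and hence your conclusion that every pair $(v,\alpha)$ produces a low-potential obstruction set $S_{v,\alpha}$ is unjustified. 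This is not a minor technicality: the whole difficulty of the problem is that one cannot in general color a single vertex last.

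Beyond this, the closing paragraphs are a sketch of intentions rather than an argument; phrases like ``strategically choose $\alpha$'' and ``tailored double count'' do not touch the actual structure the paper exploits. The paper's proof proceeds very differently: after the preliminary potential lemmas, it shows (Lemma~\ref{LM-M-3}) that any proper subset of potential $\le i-j+1$ misses only a single $(2;i,j)$-vertex, and from this that $V(G)$ splits into independent sets $X$ and $Y$ with $Y$ consisting of the $(2;i,j)$-vertices. It then builds an auxiliary multigraph $\widehat{G}$ on $X$ with edge set $Y$, chooses an orientation $g_0$ minimizing a certain overflow functional, and analyses the sets $S_{g_0},S'_{g_0},R_{g_0},T_{g_0}$ using a parity notion (even/odd $Y$-vertices with respect to $\HH$) together with a Hall-type matching argument. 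The extra $+1$ over the multigraph bound comes not from ``per-edge slack'' in a double count but from the fact that in a simple graph two $X$-vertices are joined by at most one path of length two through $Y$ with a given parity, which feeds into the inequalities for $R_{g_0}\cap T_{g_0}$. None of this machinery is visible in your proposal.
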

To deduce the lower bound in Theorem \ref{MT-f}, simply set ${\bf c}(v)=  (i,j)$ for every $v\in V(G)$. We will prove Theorem~\ref{MT-F} in the next section.




\section{Proof of Theorem~\ref{MT-F}}
 Suppose there exists a ${\bf c}$-critical graph $G$ with
$\rho(G, {\bf c}) \geq i-j$.  Choose such $(G,{\bf c})$ with $|V(G)|+|E(G)|$ minimum. We say that $G'$ is \emph{smaller} than $G$ if $|V(G')|+|E(G')|< |V(G)|+|E(G)|$. Let $\HH=(L,H)$ be an arbitrary cover of $G$.

For a subgraph $G'$ of $G$, let $\HH_{G'} = (L_{G'}, H_{G'})$ denote the subcover \emph{induced} by $G'$, i.e.,
\\ (1) $L_{G'} = L|_{V(G')}$, where `$f|_S$' is the restriction of function $f$ to subdomain $S$; \\
(2) $V(H_{G'}) = L(V(G'))$ and $L_{G'}(v) = L(v)$ for every $v\in V(G')$;\\
 (3) $H_{G'}[L(u)\cup L(v)] = H[L(u)\cup L(v)]$ for every $uv\in E(G')$, and for $x,y$ such that $xy\notin E(G')$, there is no edge between $L_{G'}(x)$ and $L_{G'}(y)$.

For a subset $S$ of $V(G)$, let $\HH_S = (L_S, H_S)$ denote the subcover induced by $G[S]$. If a capacity function is the restriction of ${\bf c}$ to some $S\subseteq V(G)$, we denote this capacity function by ${\bf c}$ instead of ${\bf c}|_S$, for simplicity.

For two vertices $x,y$, we use \emph{$x\sim y$} to indicate  that $x$ is adjacent to $y$, and $x\nsim y$ to indicate that $x$ is not adjacent to $y$.

\begin{lemma}\label{LM-M-1} Let $S$ be a proper subset of $V(G)$. If $\rho_{G,{\bf c}}(S) \leq i-j$, then $S=\{x\}$ for some $x\in V(G)$ with $\rho_{{\bf c}}(x)=i-j$.
\end{lemma}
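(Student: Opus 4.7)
Since $\rho(G,{\bf c})\geq i-j$ (the standing assumption in the proof of Theorem~\ref{MT-F}), $\rho_{G,{\bf c}}(T)\geq i-j$ holds for every $T\subseteq V(G)$, so the hypothesis forces $\rho_{G,{\bf c}}(S)=i-j$. The degenerate cases are immediate: $\rho_{G,{\bf c}}(\emptyset)=0>i-j$ (since $j\geq 2i+1>i$) rules out $S=\emptyset$, and $|S|=1$ gives $S=\{x\}$ with $\rho_{{\bf c}}(x)=i-j$, the desired conclusion. The task is therefore to derive a contradiction under the assumption $|S|\geq 2$.

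First, applying Lemma~\ref{lem:submod} to the disjoint pair $(S,\{v\})$ for each $v\in V\setminus S$ yields $\rho_{G,{\bf c}}(S\cup\{v\})=\rho_{G,{\bf c}}(S)+\rho_{{\bf c}}(v)-(i+1)d_S(v)$; combining this with $\rho_{G,{\bf c}}(S\cup\{v\})\geq i-j$ and $\rho_{{\bf c}}(v)\leq 2i+1<2(i+1)$ forces the structural bound $d_S(v)\leq 1$ for every $v\in V\setminus S$.

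The main plan is to contract $S$ to contradict the minimality of $(G,{\bf c})$. Let $G'$ have vertex set $(V(G)\setminus S)\cup\{s^{*}\}$ and edge set $E(G[V\setminus S])\cup\{vs^{*}:v\in V\setminus S,\,d_S(v)=1\}$ (no multi-edges, thanks to the bound on $d_S$), with ${\bf c}'(v)={\bf c}(v)$ for $v\in V\setminus S$ and ${\bf c}'(s^{*})=(0,-1)$, so that $\rho_{{\bf c}'}(s^{*})=i-j$. Using $\rho_{G,{\bf c}}(S)=i-j$ to collapse the $S$-terms, a direct computation shows $\rho_{G',{\bf c}'}(T_0\cup\{s^{*}\})=\rho_{G,{\bf c}}(T_0\cup S)$ for every $T_0\subseteq V\setminus S$, while $\rho_{G',{\bf c}'}(T)=\rho_{G,{\bf c}}(T)$ for $T\subseteq V\setminus S$. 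Both are $\geq i-j$, so $\rho(G',{\bf c}')\geq i-j$, and $|V(G')|+|E(G')|<|V(G)|+|E(G)|$ since $|S|\geq 2$.

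It remains to show $G'$ is not ${\bf c}'$-colorable: then $G'$ contains a ${\bf c}'$-critical subgraph $G^{*}$, which is smaller than $G$, so minimality of $(G,{\bf c})$ forces $\rho(G^{*},{\bf c}')\leq i-j-1$; since $G^{*}[T]\subseteq G'[T]$ differs only by deleted edges, some $T\subseteq V(G^{*})$ yields $\rho_{G',{\bf c}'}(T)\leq\rho_{G^{*},{\bf c}'}(T)\leq i-j-1$, contradicting $\rho(G',{\bf c}')\geq i-j$. To see this, fix a cover $\HH$ of $G$ admitting no $({\bf c},\HH)$-coloring and a $({\bf c}|_S,\HH_S)$-coloring $\psi$ of $G[S]$ (which exists because $G[S]$ is a proper subgraph of $G$); build $\HH'$ on $G'$ by inheriting $\HH$ on $V\setminus S$ and, for each edge $vs^{*}\in E(G')$, matching $p(s^{*})$ to the $\HH$-neighbor of $\psi(u_v)$ in $L(v)$, where $u_v$ is $v$'s unique $S$-neighbor. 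Since ${\bf c}'(s^{*})=(0,-1)$, every $({\bf c}',\HH')$-coloring $\phi'$ must satisfy $\phi'(s^{*})=p(s^{*})$ with $\deg_{H'_{\phi'}}(p(s^{*}))=0$, forcing $\phi'(v)$ to avoid the $\HH$-neighbor of $\psi(u_v)$ in $L(v)$; then $\phi=\psi\cup\phi'|_{V\setminus S}$ is a $({\bf c},\HH)$-coloring of $G$, contradicting the choice of $\HH$. The main obstacle is this lifting step---verifying $\deg_{H_\phi}(\psi(u))\leq{\bf c}(u)$ for every $u\in S$ uses precisely that $d_S(v)\leq 1$ makes $u_v$ the only $S$-neighbor of any $v\in N_G(u)\cap(V\setminus S)$, together with the fact that ${\bf c}'(s^{*})=(0,-1)$ isolates $p(s^{*})$ in $H'_{\phi'}$, jointly suppressing all cross-contributions from $V\setminus S$ to $\psi(u)$'s degree in $H_\phi$.
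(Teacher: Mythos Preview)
Your proof is correct and follows essentially the same strategy as the paper's: contract $S$ to a single vertex of potential $i-j$, check that the resulting weighted pair still has potential at least $i-j$, and use the lifting of colorings between $G'$ and $G$ together with minimality to reach a contradiction. Two cosmetic differences are worth noting. First, the paper takes $S$ \emph{maximal} and uses that maximality to obtain both $d_S(v)\leq 1$ and $\rho(G',{\bf c}')\geq i-j$; you observe that maximality is unnecessary, since the standing hypothesis $\rho(G,{\bf c})\geq i-j$ already bounds every subset and gives $\rho_{G',{\bf c}'}(T_0\cup\{s^*\})=\rho_{G,{\bf c}}(T_0\cup S)\geq i-j$ directly. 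Second, the paper runs the minimality argument in the forward direction (smaller pair with potential $\geq i-j$ $\Rightarrow$ colorable $\Rightarrow$ extend to $G$), whereas you run it contrapositively (non-colorable $\Rightarrow$ critical subgraph with potential $\leq i-j-1$ $\Rightarrow$ contradiction). These are equivalent uses of the same minimal-counterexample hypothesis, so the two arguments are substantively the same.
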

\begin{proof}
Suppose the lemma fails. Let $S$ be a maximal proper subset of $V(G)$ such that $\rho_{G,{\bf c}}(S)\leq i-j$ and $|S|\geq 2$.
If $|N(v)\cap S|\geq 2$ for some $v\in V(G)\setminus S$, then
\[
\rho_{G,{\bf c}}(S\cup\{v\})\leq  i-j+2i+1-2(i+1) = i-j-1.
\]
If $S\cup \{v\}\neq V(G)$, this contradicts the maximality of S, otherwise this contradicts the choice of $G$.  Thus
\begin{equation}\label{new1}
\mbox{\em $|N(v)\cap S|\leq 1$ for every $v\in V(G)\setminus S$.}
\end{equation}
Since $G$ is ${\bf c}$-critical,  $G[S]$ admits an $({\bf c},\HH_S)$-coloring $\phi$.

Construct $G'$ from $G-S$ by adding a new vertex $v^*$ adjacent to every $u\in V(G)-S$ that was adjacent to a vertex in $S$.
Define a capacity function ${\bf c'}$ by letting
 ${\bf c'}(v^*) = (-1,0)$ and ${\bf c'}(u) = {\bf c}(u)$ for $u\in V(G'-v^*)$.

 By~\eqref{new1}, $G'$ is simple. Suppose $\rho_{G',{\bf c'}}(A)\leq i-j-1$ for some $A\subseteq V(G')$. Since $G'-v^*\subseteq G$ and
 ${\bf c'}(u) = {\bf c}(u)$ for $u\in V(G'-v^*)$, $v^*\in A$. Then using~\eqref{eq:submod} and $\rho_{G,{\bf c}}(S)\leq i-j=\rho_{G',{\bf c'}}(v^*)$,
 $$\rho_{G,{\bf c}}(S\cup (A-v^*))=\rho_{G,{\bf c}}(S)+\rho_{G,{\bf c}}(A-v^*)-(i+1)|E_G(S,A-v^*)|
 $$
 $$\leq \rho_{G',{\bf c'}}(v^*)+\rho_{G',{\bf c'}}(A-v^*)-(i+1)|E_{G'}(v^*,A-v^*)|=\rho_{G',{\bf c'}}(A)\leq i-j-1.
 $$
 Again, this contradicts either the maximality of $S$ or the choice of $G$. This yields
 \begin{equation}\label{new2}
\rho(G',{\bf c'})\geq i-j.
\end{equation}

For every $x\in S$ and $y\in N(x)\setminus S$, denote the neighbor of $\phi(x)$ in $L(y)$ by $y_{\phi}$. Let $\HH' = (L',H')$ be the cover of $G'$ defined as follows:

1) $L'(v^*) = \{p(v^*), r(v^*) \}$, and $L'(u) = L(u)$ for every $u\in V(G)\setminus S$;

2) $y_{\phi}\sim r(v^*)$ for every $y\in N(S)$,  and    $H'[\{u,w\}] = H[\{u,w\}]$ for every $u,w \in V(G'-v^*)$.

By~\eqref{new2} and the minimality of $G$, $G'$ has a $({\bf c'}, \HH')$-coloring $\psi$. Since ${\bf c'}(v^*) = (-1,0)$,
\begin{equation}\label{new3}
\mbox{\em $\psi(v^*)=r(v^*)$ and
$\psi(y)\neq y_{\phi}$ for every $y\in N(S)$.
}
\end{equation}

Let $\theta$ be an $\HH$-map such that $\theta\mid _S = \phi$ and $\theta\mid _{V(G)\setminus S} = \psi\mid_{V(G'-v^*)}$.
By~\eqref{new3},  $\theta$ is a $({\bf c}, \HH)$-coloring of $G$, a contradiction.
\end{proof}

Lemma~\ref{LM-M-1} implies that \begin{equation}\label{eq-a}
\text{for every  $F\subseteq V(G)$, }  \rho_{G,{\bf c}}(F) \geq i-j.
\end{equation}

\begin{lemma}\label{LM-M-2}For every $u \in V(G)$, the following statements hold:

\smallskip
\emph{(i)} ${\bf c}_1(u), {\bf c}_2(u) \geq 0$;\hspace{9mm}
\emph{(ii)} $d_G(u) \geq 2$;\hspace{9mm}
\emph{(iii)} $\rho_{{\bf c}}(u) \geq i-j+1$.
\end{lemma}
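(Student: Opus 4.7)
I propose to prove (i), (iii), (ii) in that order: (iii) is an immediate corollary of (i), and (ii) uses (i) for a clean case analysis.

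For (i), the strategy is proof by contradiction. Suppose by symmetry that ${\bf c}_1(u)=-1$ for some $u$. Then any $({\bf c},\HH)$-coloring $\phi$ of $G$ is forced to set $\phi(u)=r(u)$, and the constraint $\deg_{H_\phi}(r(u))\leq {\bf c}_2(u)$ becomes a bound on how many neighbors $v$ of $u$ may take the value $v_u\in L(v)$ matched to $r(u)$ in $H$. Applying (\ref{eq-a}) to $\{u\}$ already gives ${\bf c}_2(u)\geq 0$. The plan is to reduce the problem to the smaller weighted pair $(G-u,{\bf c}')$, where ${\bf c}'$ is obtained from ${\bf c}$ by modifying capacities on $N_G(u)$: for every $v\in N_G(u)$ we decrease the capacity of $v_u$ by one (to compensate for the extra edge to $r(u)$ that appears after extension), and on a judiciously chosen subset $T\subseteq N_G(u)$ of size $d_G(u)-{\bf c}_2(u)$ we further lower this capacity to $-1$ (forcing $\psi(v)\ne v_u$ for $v\in T$). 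By construction, any $({\bf c}',\HH_{G-u})$-coloring $\psi$ of $G-u$ extends to a $({\bf c},\HH)$-coloring of $G$ via $\phi(u)=r(u)$: the degree of $r(u)$ in $H_\phi$ is at most $|N_G(u)\setminus T|={\bf c}_2(u)$, and the one-unit capacity decrease at each $v_u$ absorbs the new edge incident to $r(u)$.

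Now I invoke the minimality of $(G,{\bf c})$: $(G-u,{\bf c}')$ has strictly smaller $|V|+|E|$. Either (a) $(G-u,{\bf c}')$ is ${\bf c}'$-colorable, in which case the desired $\psi$ exists and extension gives a contradiction, or (b) some ${\bf c}'$-critical subgraph of $G-u$ has potential at most $i-j-1$ (otherwise it would be a smaller counterexample violating the minimal choice of $(G,{\bf c})$). In case (b), using submodularity (Lemma~\ref{lem:submod}) and the explicit form of the capacity modifications, the offending subset of $V(G-u)$ lifts (typically by adjoining $u$) to a subset $A\subseteq V(G)$ satisfying $\rho_{G,{\bf c}}(A)\leq i-j-1$, contradicting (\ref{eq-a}). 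For (iii), note that (i) gives ${\bf c}_1(u),{\bf c}_2(u)\geq 0$, so $\rho_{{\bf c}}(u)=i-j+1+{\bf c}_1(u)+{\bf c}_2(u)\geq i-j+1$. For (ii), suppose $d_G(u)\leq 1$ and let $\psi$ be a $({\bf c},\HH_{G-u})$-coloring of $G-u$, guaranteed by criticality. If $d_G(u)=0$, any $\phi(u)\in L(u)$ extends $\psi$ with $\phi(u)$ of degree $0$ in $H_\phi$, valid by (i). If $d_G(u)=1$ with neighbor $v$, then exactly one of $p(u),r(u)$ is matched to $\psi(v)$ in $H$; assigning $\phi(u)$ to the unmatched one gives a degree-$0$ extension, again valid by (i). Either way $G$ admits a $({\bf c},\HH)$-coloring, contradicting criticality.

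The main obstacle is the bookkeeping in part (i): choosing $T$ correctly (including a careful treatment of those $v\in N_G(u)$ for which ${\bf c}(v_u)=-1$ already) and tracking the potential drops from the capacity modifications so that the submodularity argument in case (b) truly produces a subset of $V(G)$ violating (\ref{eq-a}). I expect the hypothesis $j\geq 2i+1$ to play the essential role here, supplying enough slack in the capacities at $N_G(u)$ to absorb the modifications, analogously to how it enters the proofs behind Theorem~B.
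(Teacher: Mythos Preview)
Your arguments for (ii) and (iii) are fine and match the paper. The gap is in (i), in the potential bookkeeping for your case (b). Setting ${\bf c}'(v_u)=-1$ on a vertex $v\in T$ can drop $\rho(v)$ by as much as $j+1$ (namely when $v_u=r(v)$ and ${\bf c}_2(v)=j$), whereas adjoining $u$ to a subset $S\subseteq V(G-u)$ recovers only $(i+1)$ per edge in $E_G(u,S)$ plus $\rho_{{\bf c}}(u)$. Since $j+1>i+1$, the inequality does not close. Concretely: take $u$ with ${\bf c}(u)=(-1,0)$ (so $|T|=d_G(u)$), two neighbours $v,w$ with $v_u=r(v)$, $w_u=r(w)$, ${\bf c}_2(v)={\bf c}_2(w)=j$, and $vw\in E(G)$. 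Then
\[
\rho_{G-u,{\bf c}'}(\{v,w\})=2(2i-j)-(i+1)=3i-2j-1\le i-j-1,
\]
so $S=\{v,w\}$ is a candidate for (b). But
\[
\rho_{G,{\bf c}}(\{u,v,w\})=(i-j)+2(2i+1)-3(i+1)=2i-j-1,
\]
which for $i\ge 1$ is strictly larger than $i-j-1$ and in fact $\ge i-j$, so neither $S$ nor $S\cup\{u\}$ violates~(\ref{eq-a}). You have no freedom in $T$ here (both neighbours are forced), and the cover $\HH$ is adversarial, so it may send every $v_u$ to the rich slot. The hope that $j\ge 2i+1$ supplies slack is backwards: a larger $j$ makes the drop from $j$ to $-1$ worse, not better.

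The paper's proof of (i) avoids this by never deleting $u$ and never dropping a capacity to $-1$. It deletes a single edge and lowers exactly two capacities by one each, so the total potential change is $+ (i+1)-2\ge 0$; any low-potential set in the reduced pair then lifts directly via Lemma~\ref{LM-M-1}. The case split is on whether the surviving capacity at $u$ equals $0$ or is $\ge 1$; in the former case one argues through a neighbour $v$ of $u$ and removes an edge $vw$ rather than an edge at $u$. Only $j>i$ is used, not $j\ge 2i+1$. (A minor side remark: ``by symmetry ${\bf c}_1(u)=-1$'' is not literally a symmetry, since ${\bf c}_1$ and ${\bf c}_2$ have different ranges, though the analogous argument does go through.)
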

\begin{proof} We prove (i) by contradiction. Suppose there is a vertex $u \in V(G)$ with $L(u) = \{\alpha(u), \beta(u)\}$, where ${\bf c}(\alpha(u))=-1$.
Let $v \in N_G(u)$ and  $L(v)=\{\alpha(v), \beta(v)\}$, where $\alpha(v)\alpha(u), \beta(v)\beta(u)\in E(H)$.

\medskip

\noindent{\bf Case 1.} ${\bf c}(\beta(u)) = 0$. If $\min\{{\bf c}_1(v), {\bf c}_2(v)\}=-1$, then
 $$\rho_{G,{\bf c}}(\{u,v\})=\rho_{{\bf c}}(u)+\rho_{{\bf c}}(v)-(i+1)\leq (i-j)+(i-j+1-1+j)-(i+1)= i-j-1,$$
  a contradiction to (\ref{eq-a}).
Thus  ${\bf c}_1(v), {\bf c}_2(v) \geq 0$.

For every $w\in N_G(v)$, let $L(w) = \{\alpha(w), \beta(w)\}$, so that $\alpha(w)\sim \alpha(v), \beta(w)\sim \beta(v)$.
Since $G$ is ${\bf c}$-critical, graph $G-v$ has a $({\bf c}, \HH_{G-v})$-coloring $\phi$.

\noindent{\bf Case 1.1:}
 ${\bf c}(\alpha(w)) = -1$ for all $w\in N(v)$ (in particular, this happens if $d(v)=1$).
 Then $\phi(w) = \beta(w)$ for every $w\in N(v)$. Extend $\phi$ to $v$ by  $\phi(v) = \alpha(v)$. This  $\phi$ is a $({\bf c},\HH)$-coloring of $G$, a contradiction.

\noindent{\bf Case 1.2:} There is $w\in N(v)$ such that ${\bf c}(\alpha(w))\geq 0$. Then $\rho_{{\bf c}}(w) \geq i-j+1$ since otherwise $\rho_{G,{\bf c}}(\{u,v,w\})\leq 2(i-j)-1<i-j-1$.
Define $(G', {\bf c'})$  as follows:
\\
1) $G' = G-vw$ and $\HH' = (L,H')$ is the sub-cover of $\HH$ induced by $G'$;\\
2) ${\bf c'}$ differs from ${\bf c}$ only for $\alpha(v)$ and $\alpha(w)$:
${\bf c'}(\alpha(v)) = {\bf c}(\alpha(v))-1$  and ${\bf c'}(\alpha(w))={\bf c}(\alpha(w))-1$.

By the minimality of $G$, if $G'$ is not colorable, then there is $F\subseteq V(G')$  with $\rho_{G,{\bf c'}}(F) \leq i-j-1$. By~\eqref{eq-a},
$F\cap \{v,w\}\neq \emptyset$.
If $v, w \in F$, then
$$\rho_{G,{\bf c}}(F)=\rho_{G,{\bf c'}}(F)+2-(i+1) < \rho_{G,{\bf c'}}(F) < i-j.$$ If $v \in F$ and $w \notin F$, then $\rho_{G,{\bf c}}(V(F)) \leq \rho_{G,{\bf c'}}(F)+1 \leq i-j$. Since $\rho_{{\bf c}}(v)\geq i-j+1$, this  contradicts Lemma \ref{LM-M-1}. Since $\rho_{{\bf c}}(w)\geq i-j+1$, the case when $w\in F, v\notin F$ is impossible for the same reason.  Thus, $G'$ admits a $({\bf c'},\HH')$-coloring $\phi$.
Since ${\bf c}(\alpha(u))=-1$ and ${\bf c}(\beta(u)) = 0$,  we have $\phi(u) = \beta(u)$ and $\beta(u)$ has no neighbors
 in $H'_{\phi}$.
 Then $\phi(v) = \alpha(v)$ and by the construction of $G'$, independently of the color of $w$, $\phi$ is a $({\bf c}, \HH)$-coloring of $G$.

\medskip

\noindent{\bf Case 2.} ${\bf c}(\beta(u))\geq 1$.
Then $\rho_{{\bf c}}(u)\geq i-j+1$. Since $G$ is ${\bf c}$-critical,  $G-uv$ has a $({\bf c}, \HH_{G-uv})$-coloring $\phi$.
If ${\bf c}(\beta(v)) = -1$, then $\phi(u) = \beta(u)$ and $\phi(v) = \alpha(v)$. So, $\phi$ is also a $({\bf c}, \HH)$-coloring of $G$, a contradiction. Hence ${\bf c}(\beta(v))\geq 0$. Also by Lemma~\ref{LM-M-1},
$$i-j+1\leq\rho_{G,{\bf c}}(\{u,v\})=\rho_{{\bf c}}(v)+\rho_{{\bf c}}(u)-(i+1)\leq \rho_{{\bf c}}(v)+i-j+1-1+j-i-1,$$ thus $\rho_{{\bf c}}(v)\geq i-j+2.$

Define $(G', {\bf c'})$  as follows:
\\
1) $G' = G-vu$ and $\HH' = (L,H')$ is the sub-cover of $\HH$ induced by $G'$;\\
2) ${\bf c'}$ differs from ${\bf c}$ only for $\beta(v)$ and $\beta(u)$:
${\bf c'}(\beta(v)) = {\bf c}(\beta(v))-1$  and ${\bf c'}(\beta(u))={\bf c}(\beta(u))-1$.

Repeating the argument of Case 1.2, we prove that $G'$ has a $({\bf c'}, \HH_{G'})$-coloring $\psi$, which is also a $({\bf c}, \HH)$-coloring of $G$, a contradiction. This proves~(i).

\medskip

For (ii), suppose there is a vertex $u$ with $N(u) = \{v\}$. By (i), ${\bf c}_1(u), {\bf c}_2(u) \geq 0$.
 Since $G$ is ${\bf c}$-critical,
  $G-u$ has a $({\bf c}, \HH_{G-u})$-coloring $\phi$. Now choosing $\phi(u)\in L(u)$ not adjacent to $ \phi(v)$ we obtain a $({\bf c}, \HH)$-coloring of $G$,
  a contradiction. This proves (ii), and
(iii) follows immediately from (i).
\end{proof}

Lemmas~\ref{LM-M-1} and \ref{LM-M-2} (iii) imply that
\begin{equation}\label{eq-b}
\text{for every  $\emptyset \neq F\subsetneq V(G)$, } \rho_{G,{\bf c}}(F) \geq i-j+1.
\end{equation}

We say a vertex $v\in V(G)$ is a \emph{$(d;c_1,c_2)$-vertex} if $d(v)=d$, ${\bf c}_1(v)=c_1$, and ${\bf c}_2(v)=c_2$.
The following lemma is a crucial ingredient of our argument.

\begin{lemma}\label{LM-M-3} Let $\emptyset\neq F\subsetneq V(G)$. If $\rho_{G,{\bf c}}(F) \leq i-j+1$, then $V(G)\setminus F=\{x\}$, where $x$ is a $(2;i,j)$-vertex.
\end{lemma}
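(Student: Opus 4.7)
My plan is to mirror the proof of Lemma~\ref{LM-M-1}, using the ${\bf c}$-criticality of $G$ together with a contraction of $F$. I would first dispose of the case $V(G)\setminus F=\{x\}$ directly: since $V(G)=F\sqcup\{x\}$,
\[
\rho_{G,{\bf c}}(V(G))=\rho_{G,{\bf c}}(F)+\rho_{\bf c}(x)-(i+1)d(x),
\]
and combining $\rho_{G,{\bf c}}(V(G))\geq i-j$ (our standing contradiction hypothesis for Theorem~\ref{MT-F}), $\rho_{G,{\bf c}}(F)\leq i-j+1$, $\rho_{\bf c}(x)\leq 2i+1$, and $d(x)\geq 2$ (Lemma~\ref{LM-M-2}(ii)) forces every inequality to be tight, yielding $d(x)=2$ and ${\bf c}(x)=(i,j)$.

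The bulk of the proof rules out $|T|\geq 2$, where $T=V(G)\setminus F$. Pick $F$ maximal in size among proper subsets of $V(G)$ with $\rho_{G,{\bf c}}(F)\leq i-j+1$ and $|T|\geq 2$. If some $y\in T$ had $|N(y)\cap F|\geq 2$ then $\rho_{G,{\bf c}}(F\cup\{y\})\leq (i-j+1)+(2i+1)-2(i+1)=i-j$, and since $F\cup\{y\}$ remains a proper subset (as $|T|\geq 2$) this would contradict \eqref{eq-b}; hence $|N(y)\cap F|\leq 1$ for every $y\in T$. Since $G$ is ${\bf c}$-critical, fix a $({\bf c},\HH_F)$-coloring $\phi$ of $G[F]$; for each $y\in N_G(F)\cap T$, let $x_y$ be its unique $F$-neighbor, $y_\phi\in L(y)$ the vertex matched to $\phi(x_y)$ in $H$, and $y_\phi^\perp$ the other element of $L(y)$. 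Build $G'$ from $G-F$ by adding a new vertex $v^*$ with $N_{G'}(v^*)=N_G(F)\cap T$ (simple by the previous sentence), extend $\HH$ to a cover $\HH'$ by declaring $r(v^*)\sim y_\phi$ and $p(v^*)\sim y_\phi^\perp$ for every such $y$, and set ${\bf c}'(v^*)=(-1,0)$. A submodularity computation in the style of Lemma~\ref{LM-M-1} yields
\[
\rho_{G',{\bf c}'}(A)=\begin{cases}\rho_{G,{\bf c}}(A), & v^*\notin A,\\ \rho_{G,{\bf c}}\bigl(F\cup(A\setminus\{v^*\})\bigr)-1, & v^*\in A,\end{cases}
\]
so by \eqref{eq-b} we get $\rho(G',{\bf c}')\geq i-j$ whenever $\rho_{G,{\bf c}}(V(G))\geq i-j+1$. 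In this generic case, minimality of $(G,{\bf c})$ (noting that $G'$ is strictly smaller when $|F|\geq 2$) produces a $({\bf c}',\HH')$-coloring $\psi$ of $G'$, which forces $\psi(v^*)=r(v^*)$ and hence $\psi(y)\neq y_\phi$ for every $y\in N_G(F)\cap T$; then $\phi\cup\psi|_{V(G)\setminus F}$ is a $({\bf c},\HH)$-coloring of $G$, contradicting criticality.

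The main obstacle is the residual case $\rho_{G,{\bf c}}(V(G))=i-j$, in which the above contraction only yields $\rho_{G',{\bf c}'}(V(G'))=i-j-1$ and minimality of $(G,{\bf c})$ no longer applies. I would handle this by re-running the argument with ${\bf c}'(v^*)=(0,0)$, which---using that $V(G)$ is then the unique subset attaining $\rho=i-j$---restores $\rho(G',{\bf c}')\geq i-j$; then I would analyze both possible values of $\psi(v^*)$. The subcase $\psi(v^*)=r(v^*)$ extends as before, while $\psi(v^*)=p(v^*)$ instead forces $\psi(y)=y_\phi$ for every relevant $y$ and introduces extra edges $\phi(x_y)\sim y_\phi$ into the glued cover graph. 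Showing these extra edges can always be absorbed by slack---via a careful choice of $\phi$, exploiting the rigidity forced by $\rho_{G,{\bf c}}(V(G))=i-j$, the lower bounds on $\rho_{\bf c}(y)$ for $y\in T$ coming from the maximality of $F$ and \eqref{eq-b}, and the hypothesis $j\geq 2i+1$---is the delicate technical step. A separate direct check handles the sporadic case $|F|=1$ (where $G'$ fails to be strictly smaller): here ${\bf c}(x)=(0,0)$ together with $d(x)\geq 2$ pins down the local structure enough to produce the contradiction by hand.
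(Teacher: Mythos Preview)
Your reduction handles $|T|=1$ cleanly and the ``generic'' subcase $\rho_{G,{\bf c}}(V(G))\geq i-j+1$ is fine (modulo the minor slip that your displayed formula is an inequality $\geq$, not an equality, unless $\rho_{G,{\bf c}}(F)=i-j+1$). The genuine gap is the ``residual'' subcase $\rho_{G,{\bf c}}(V(G))=i-j$, which is not a corner case but the heart of the lemma.

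Your plan there is to switch to ${\bf c}'(v^*)=(0,0)$ and, when $\psi(v^*)=p(v^*)$ (forcing $\psi(y)=y_\phi$ for every boundary $y$), absorb the new edges $\phi(x_y)\sim y_\phi$ by slack obtained through ``a careful choice of $\phi$''. But no such slack exists: for \emph{every} $({\bf c},\HH_F)$-coloring $\phi$ of $G[F]$ and every $u\in F$ with a neighbor in $T$, the degree of $\phi(u)$ in $H_\phi$ already equals ${\bf c}(\phi(u))$. (This is exactly Claim~2 in the paper's proof; the idea is that if some boundary $\phi(u)$ had slack, one could contract $F$ to a vertex of capacity $(0,-1)$ while leaving one edge at $u$ unrouted, and then extend.) So the glued map necessarily over-saturates each $x_y$, and neither the maximality of $F$ nor the lower bound $\rho_{\bf c}(y)\geq i-j+1$ rescues this---those bounds constrain $y$, not $x_y$. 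The $|F|=1$ sporadic case is also not as routine as you suggest.

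The paper's argument is structurally quite different and does not proceed by a single contraction. After establishing that each $v\in T$ has at most one neighbor in $F$, it partitions $T$ into the set $Y$ of $(2;i,j)$-vertices and its complement $X$, proves both are independent, and proves the ``no-slack'' Claim~2. It then fixes an arbitrary $({\bf c},\HH_F)$-coloring $\phi$, assigns each $x\in X$ a weight $w_\phi(x)$ measuring residual capacity after accounting for $Y$-neighbors, and shows via Hall's theorem that if $\sum_{x\in A}w_\phi(x)\geq |E(Q[A])|$ for all $A\subseteq X$ (with $Q$ the auxiliary multigraph on $X$ whose edges are the $Y$-vertices with both ends in $X$), then an orientation of $Q$ extends $\phi$ to a $({\bf c},\HH)$-coloring of $G$. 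A potential calculation pins the unique Hall-violating set to $A=X$ and forces further rigidity (e.g.\ $\overline{\psi(x)}=p(x)$ for all $x\in X_1$); finally, Claim~2 is reused to recolor a single boundary vertex of $F$ and break that rigidity. None of this machinery---the $X/Y$ split, Hall's condition, the recoloring trick---is present in your sketch, and I do not see how to bypass it.
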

\begin{proof}
Suppose the lemma fails. Then there is  a maximal  $\emptyset\neq F\subsetneq V(G)$ such that $\rho_{G,{\bf c}}(F)\leq i-j+1$ and $V(G)-F$ is not a single $(2;i,j)$-vertex.

If $V(G)-F = \{v\}$, then by the choice of $F$ either $\rho(v)<2i+1$ or by Lemma~\ref{LM-M-2} (ii), $d(v)\geq 3$. In both cases,
 by~\eqref{eq:submod},
 $$\rho(V(G))=\rho(F)+\rho(v)-(i+1)d(v)<(i-j+1)+(2i+1)-2(i+1)=i-j,$$
  a contradiction. Hence $|V(G\setminus F)|\geq 2$.
Because of~\eqref{eq-b}, we have
\begin{equation}\label{nbg}
    |N(v)\cap F|\leq 1,\,\text{ for every } v\in V(G)\setminus F.
\end{equation}
Let $Y$ be the set  of all $(2;i,j)$-vertices in $V(G)-F$, and $X=V(G)\setminus F\setminus Y$.
\begin{claim}\label{clm:1}
{\em Both $X$ and $Y$ are independent sets.}
\medskip

\noindent\emph{Proof of Claim~\ref{clm:1}.}  Suppose $u, v \in Y$ and $u\sim v$. Let $u'\in N(u)-v, v'\in N(v)-u$.
Since $G$ is ${\bf c}$-critical,
 $G-\{u,v\}$ has a $({\bf c}, \HH_{G-\{u,v\}})$-coloring $\phi$. We extend $\phi$ to $u$ and $v$ by choosing $\phi(u)\in L(u)$ with
 $\phi(u)\nsim \phi(u')$ and $\phi(v)\in L(v)$ with
  $\phi(v)\nsim \phi(v')$. Since $u$ and $v$ are $(2;i,j)$-vertices,
the new $\phi$ is a $({\bf c},\HH)$-coloring of $G$, a contradiction.

Now suppose $u, v \in X$ and $u\sim v$. Let $G' = G-uv$. Define ${\bf c'}$ by $ {\bf c'}(y) = {\bf c}(y)$ for $y\notin \{u,v\}$ and
${\bf c'_k}(x) = {\bf c_k}(x)-1$ for $x\in \{u,v\}$ and $k\in \{1,2\}$.
If $G'$ has a $({\bf c'},\HH)$-coloring $\phi$, then $\phi$ is a $({\bf c},\HH)$-coloring of $G$, a contradiction. Thus $G'$ has no such coloring.
By the minimality of $G$, this yields that
 there is  $Q\subseteq V(G')$ with $\rho_{G',{\bf c'}}(Q) < i-j$. By the construction of $G'$,  $Q\cap \{u,v\}\neq\emptyset$.
 If $\{u,v\} \subseteq Q$, then $$\rho_{G,{\bf c}}(Q) = \rho_{G',{\bf c'}}(Q)+4-(i+1) < i-j,$$ a contradiction.
 Thus by the symmetry between $u$ and $v$ we may assume  $u \in V(Q)$ and $v \notin V(Q)$.
 In this case,
$$\rho_{G,{\bf c}}(Q) \leq \rho_{G',{\bf c'}}(Q)+2 \leq i-j-1+2=i-j+1.$$
Note  that by maximality of $F$, $F\setminus Q\neq\varnothing$ and $F\cap Q\neq\varnothing$. Then by (\ref{eq:submod}) and (\ref{eq-b}),
\[
\rho_{G,{\bf c}}(F\cup Q)\leq 2(i-j+1)-\rho_{G,{\bf c}}(F\cap Q)\leq i-j+1.
\]
 Since $v\notin F\cup Q$ and $v$ is not a $(2;i,j)$-vertex, this contradicts the maximality of $F$.
\end{claim}

\medskip
Let $X_1:=N_G(F)\cap X$, $Y_1:=N_G(F)\cap Y$, $X_0:=X\setminus X_1$, and $Y_0:=Y\setminus Y_1$.

\begin{claim}\label{clm:2}
{\em Let $u\in F\cap N_G(X\cup Y)$. For any $({\bf c},\HH_F)$-coloring $\phi$ of $G[F]$, the degree of $\phi(u)$ in the $\phi$-induced subgraph $H_\phi$ is equal to ${\bf c}(\phi(u))$.}
\medskip

\noindent\emph{Proof of Claim~\ref{clm:2}.}
Suppose that for some $({\bf c},\HH_F)$-coloring $\phi$ of $G[F]$, the degree of $\phi(u)$ in $H_\phi$ is at most
$ {\bf c}(\phi(u))-1$. Let $w\in N(u)\setminus F$. Denote $S:=X_1\cup Y_1\setminus \{w\}$.
\medskip

\noindent{\bf Case 1:} $w\in Y_1$.
Let $w'$ be the other neighbor of $w$. By Claim~\ref{clm:1} and (\ref{nbg}), $w'\in X$.
Construct $G'$ from $G-F-w$ by adding a new vertex $v^*$ adjacent to each vertex in $S$.
By~\eqref{nbg},
\begin{equation}\label{new5}
\mbox{\em $|E_G(F,S')|=|E_{G'}(v^*,S')|$ for every $S'\subseteq S$. }
\end{equation}
Define ${\bf c'}$ by ${\bf c'}(x) = {\bf c}(x)$ for all $x\in V(G')\setminus \{v^*\}$ and
 ${\bf c'}(v^*) = (0,-1)$.
 Define $L'$ by $L'(v^*)= \{p(v^*),r(v^*) \}$ and  $L'(x) = L(x)$ for $x\in V(G')- v^*$.
Let $\HH'=(L',H')$ be a cover of $G'$ such that $H'[\{x,y\}]= H[\{x,y\}]$ for $x,y\in V(G')-v^*$
and the neighbors of $p(v^*)$ and $r(v^*)$ are defined as follows. For each $v\in S$, if $z\in N(v)\cap F$ and $L(v)=\{\alpha(v),\beta(v)\}$ where $\alpha(v)\sim
\phi(z)$, then $p(v^*)\sim \alpha(v)$ and $r(v^*)\sim \beta(v)$.

If there is $Q\subset V(G')$ with $\rho_{G',{\bf c'}}(Q)\leq i-j-1$, then $v^*\in Q$ since $G'-v^*\subset G$.
 In this case,  using~\eqref{eq:submod}
and~\eqref{new5} and remembering that $\rho_{G',{\bf c'}}(v^*)=i-j$,
$$
\rho_{G,{\bf c}}((Q-v^*)\cup F)= \rho_{G,{\bf c}}(F)+\rho_{G,{\bf c}}(Q-v^*)-(i+1)|E_G(F,Q-v^*)|$$
$$\leq (i-j+1)+\rho_{G',{\bf c'}}(Q-v^*)-(i+1)|E_{G'}(v^*,Q-v^*)|=1+\rho_{G',{\bf c'}}(Q)\leq i-j.
$$
Since $w\notin (Q-v^*)\cup F$, this contradicts~\eqref{eq-b}. Thus $\rho(G',{\bf c'})\geq i-j$.
Hence by the minimality of $G$, $G'$ has a $({\bf c'}, \HH')$-coloring $\psi$. Since ${\bf c'}(v^*) = (0,-1)$,
$\psi(v^*)=p(v^*)$ and the degree of $p(v^*)$ in $\HH'_\psi$ is zero. Hence
\begin{equation}\label{new6}
\mbox{\em $\psi(v)=\beta(v)$ for every $v\in S$. }
\end{equation}

Define an $\HH$-map $\theta$ of $G$ by $\theta(x) = \phi(x)$ for $x\in F$,  $\theta(v) = \psi(v)$ for $v\in V(G)-F-w$ and choosing $\theta(w)\in L(w)$ with
 $\theta(w)\nsim \psi(w')$. We claim that for every $v\in V(G)$ the degree of $\theta(v)$ in $ H_\theta$ is at most its capacity. This is true for each
 $v\in F-u$ since by~\eqref{new6} for each neighbor $v'$ of $v$ in $V(G)-F$, $\psi(v')\nsim \phi(v)$. For the same reason, this is true for each
 $x\in V(G)-F-w$. This is true for $u$ by its choice and the fact the only possible neighbor of $\phi(u)$ outside of $\phi(F)$ is $\theta(w)$.
 And this is true for $w$, since $w$ is a $(2;i,j)$-vertex and $\theta(w)\nsim \psi(w')$.

\medskip

\noindent{\bf Case 2:} $w\in X_1$. 
Construct $G'$ from $G-F$ by adding a new vertex $v^*$ adjacent to each vertex in $S-w$. As in Case 1,~\eqref{new5} holds.
Define ${\bf c''}$ by ${\bf c''}(x) = {\bf c}(x)$ for all $x\in V(G')\setminus \{w,v^*\}$,
 ${\bf c''}(v^*) = (0,-1)$ and ${\bf c''}(w) = ({\bf c}_1(w)-1, {\bf c}_2(w)-1)$.
 Define
 $L''$ and $\HH''=(L'',H'')$ exactly as we defined $L'$ and $\HH'=(L',H')$ in Case 1.

Suppose there is $Q\subseteq V(G')$ with $\rho_{G',{\bf c''}}(Q)\leq i-j-1$. If $v^*\notin Q$, then
$w \in Q$, for otherwise $Q\subseteq G$. In this case $\rho_{G,{\bf c}}(Q\cup F)\leq i-j+1+(i-j-1+2)-(i+1)<i-j$. So $v^*\in Q$. Moreover, if $ Q\neq V(G')$, then repeating
the argument of Case 1 we get a contradiction. If $Q= V(G')$, then since $v^*w\notin E(G')$ and $\rho_{G',{\bf c''}}(w)=\rho_{G,{\bf c}}(w)-2$,
 $$
\rho_{G,{\bf c}}(V(G))= \rho_{G,{\bf c}}(F)+\rho_{G,{\bf c}}(Q-v^*)-(i+1)|E_G(F,Q-v^*)|
\leq (i-j+1)+\rho_{G',{\bf c''}}(Q-v^*)$$
$$+2-(i+1)(|E_{G'}(v^*,Q-v^*)|+1)=3+\rho_{G',{\bf c''}}(Q)-(i+1)< i-j,
$$
a contradiction. Thus in all cases $\rho(G',{\bf c''})\geq i-j$. So by the minimality of $G$, $G'$ has a $({\bf c''}, \HH'')$-coloring $\psi$.
As in Case 1, $\psi(v^*)=p(v^*)$ and~\eqref{new6} holds.

 Define an $\HH$-map $\theta$ of $G$ by $\theta(x) = \phi(x)$ for $x\in F$ and $\theta(v) = \psi(v)$ for $v\in V(G)-F$.
 We claim that for every $v\in V(G)$ the degree of $\theta(v)$ in $ H_\theta$ is at most its capacity. If $v\neq w$, then the proof of it is exactly as in Case 1.
 For $v=w$ this follows from the fact that  ${\bf c''}(w) = ({\bf c}_1(w)-1, {\bf c}_2(w)-1)$.
 \end{claim}

Let $Q$ be an auxiliary graph with $V(Q)=X$, $E(Q)=Y_0$, where $y\in Y_0$ has end vertices $x_1,x_2$ in $Q$ if $N_G(y)=\{x_1,x_2\}$. From now on, we fix  a $({\bf c}, \HH_F)$-coloring $\psi$ of $F$.

For every $v\in X_1\cup Y_1$, let $v_F$ be its neighbor in $F$ and denote by $\overline{\psi(v)}$ the vertex in $L(v)$ such that $\psi(v_F) \nsim \overline{\psi(v)}$.
Define function $w_\psi:X\to\mathbb{Z}$ by: $w(x)={\bf c}(\overline{\psi(x)})-|E(x,Y_1)|$ for $x\in X_1$, and $w(x)= {\bf c}_2(x)-|E(x,Y_1)|$ for $x\in X_0$.
Note that $w_\psi$ is determined by the coloring $\psi$ on $F$.

\begin{claim}\label{clm:4.1}
There exists no $({\bf c}, \HH_F)$-coloring $\psi$, such that for every  $A\subseteq X$, 
 \begin{equation}\label{eq:idkk}
     \sum_{x\in A}w_\psi(x)\geq |E(Q[A])|.
 \end{equation}
\medskip
\noindent\emph{Proof of Claim~\ref{clm:4.1}.}
Suppose the $({\bf c}, \HH_F)$-coloring $\psi$ satisfies (\ref{eq:idkk}). Define an auxiliary bipartite graph $B=B(V_1, V_2)$ with partite sets  $V_1$ and
$V_2$ where $V_1=Y_0$ and $V_2$ has exactly $w(x)$ copies of $x$ for each $x\in X$. 
 For each edge $xy\in E(G)$ with $x\in X, y\in Y_0$, vertex $y$ is adjacent to each copy of $x$ in $B$. For any $S\subseteq V_1$, by (\ref{eq:idkk}),
 $|S|\leq |N_B(S)|$. This means that
$B$  satisfies Hall's condition and hence has  a matching $M$ saturating $V_1$. An orientation $D$ of $Q$ can then be formed as follows: for each pair $x_1, x_2\in X$, and each edge $e$ connecting $x_1, x_2$ in $Q$ ($e$ equals some $y\in Y_0$ such that $y\sim x_1$ and $y\sim x_2$), orient $e$ from $x_1$ to $x_2$ if the edge of $M$ connects $y$ to a copy of $x_1$ in $V_2$, and
 from $x_2$ to $x_1$ otherwise. Then for every $x\in X$, $d^+(x)\leq w(x)$.

Define an $\HH$-map $\phi$ of $G$ as follows.
For every $v\in F$, $\phi(v)=\psi(v)$.
 For every $x\in X_0$,  $\phi(x)=r(x)$. For every $u\in X_1\cup Y_1$,  $\phi(u)=\overline{\psi(u)}$.
For every $y\in Y_0$, if $y = x_1x_2$ in $D$, then choose $\phi(y)\in L(y)$ so that
 $\phi(y)\nsim \phi(x_2)$.

 Let us check that for every $v\in V(G)$,  the degree of $\phi(v)$ in $ H_\phi$ is at most its capacity. This is true for $v\in F$ because $\phi(v)$ has no neighbors in $\phi(V(G)-F)$ by the choice of colors for vertices in $X_1\cup Y_1$. This is true for each $y\in Y$, because each of them has two neighbors
 and $\phi(y)$ has capacity at least $i$. This is true for each $x\in X$ by~\eqref{eq:idkk} and the choice of $w$, $D$ and the colors for the vertices in $Y_0$.
 Thus $\phi$ is a $({\bf c},\HH)$-coloring of $G$, a contradiction.
\end{claim}

\medskip
By Claim~3, there is  $A\subseteq X$ with
 \begin{equation}\label{new7}
   \sum_{x\in A}w_\psi(x)\leq |E(Q[A])|-1.
    \end{equation}
    Let $Y'\subseteq Y$ consist of the vertices $y\in Y_0$ that have both neighbors in $A$ and the vertices $y\in Y_1$ with a neighbor in $A$. Then
\begin{align*}
    &\,\rho_{G,{\bf c}}(F\cup A\cup Y') = \rho_{G,{\bf c}}(F)+\sum_{x\in A}\big(\rho_{{\bf c}}(x)- |E(x,Y_1)|\big)-|A\cap X_1|(i+1) -|E(Q[A])|\\
    \leq&\,i-j+1+\sum_{x\in A}\big(i-j+1+{\bf c}_1(x)+{\bf c}_2(x)-|E(x,Y_1)|-w_\psi(x)\big)-1-|A\cap X_1|(i+1)\\
    \leq&\,i-j+1+\sum_{x\in A\cap X_0}(i-j+1+{\bf c}_1(x))+\sum_{x\in A\cap X_1}(-j+\max\{{\bf c}_1(x), {\bf c}_2(x)\})-1\leq i-j.
\end{align*}
By Lemma~\ref{LM-M-1}, the equality can hold only if  $A = X, Y' = Y$. Moreover, in this case
every non-strict inequality in the chain above is an equality, and~\eqref{new7} is an equality. The latter yields
 \begin{equation}\label{new8}
\sum_{x\in X}w_\psi(x)= |Y_0|-1,\,\mbox{\em and~\eqref{eq:idkk} holds for every $A\neq X$},
  \end{equation}
  and the former yields
   \begin{equation}\label{new9}
\mbox{\em $\overline{\psi(x)} = p(x)$ for every $x\in X_1$.}
   \end{equation}
We  consider two cases:
\medskip

\noindent{\bf Case 1.} $X_1\neq\varnothing$.
Let $v\in X_1$, and $v_F$ be its neighbor in $F$. Then $\psi(v_F)\sim r(v)$. Define ${\bf c'}$ on $F$  that differs from $ {\bf c}$ only in that ${\bf c'}(\psi(v_F)) = {\bf c}(\psi(v_F))-1$. By (\ref{eq-b}), $\rho(G,{\bf c'})\geq i-j$. By the minimality of $G$,
we can find a $({\bf c'},\HH_F)$-coloring $\psi'$ of $G[F]$. By Claim~\ref{clm:2}, $\psi'(v_F)\neq\psi(v_F)$ and $\overline{\psi'(v)} = r(v)$. Then the above chain of inequalities with $\psi'$ in place of $\psi$ does not satisfy~\eqref{new9},
a contradiction.

\noindent{\bf Case 2.} $X_1=\varnothing$. Then $Y_1\neq \emptyset$. Let $v\in Y_1$, $v_F$ be its neighbor in $F$ and $x'$ be the other neighbor.
 If $\overline{\psi(v)}\nsim r(x')$, then we can define a new function $w'$ that differs from $w$ only in that $w'(x')= {\bf c}_2(x')-|E(x',Y_1)|+1$. Then
 by~\eqref{new8}, $\sum_{x\in X}w'_\psi(x)= |Y_0|$ and~\eqref{eq:idkk} holds with $w'$ in place of $w$ for every $A\neq X$.
 Repeating the proof of Claim~\ref{clm:4.1}, we construct an orientation $D$ of the auxiliary graph $Q$ such that for every $x\in X$, $d^+(x)\leq w'(x)$.
 Then we define a map $\phi$ exactly as in the proof of Claim~\ref{clm:4.1} and check that for every $u\in V(G)$,  the degree of $\phi(u)$ in $ H_\phi$ is at most its capacity almost as in that proof with a change only for $u=x'$: the degree of $r(x')$ does not exceed ${\bf c}_2(x')$ because in addition to other conditions, $\phi(v)\nsim r(x')$. Thus $\overline{\psi(v)}\sim r(x')$.

 Define ${\bf c'}$  as in Case 1. By the same argument, we  can find a $({\bf c'},\HH_F)$-coloring $\psi'$ such that $\psi'(v)\neq \psi(v)$. Now $\overline{\psi'(v)}\nsim r(x')$. This contradicts the previous paragraph.
\end{proof}

\bigskip
Denote the set of $(2;i,j)$-vertices in $G$ by $Y$ and let $X=V(G)\setminus Y$. The proof of the following lemma is very similar to the proof of Claim~\ref{clm:1} in Lemma~\ref{LM-M-3}, so we omit the details.

\begin{lemma}\label{LM-M-4} Both $X$ and $Y$ are independent sets.
\end{lemma}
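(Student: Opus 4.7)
The argument will mirror Claim~\ref{clm:1} inside the proof of Lemma~\ref{LM-M-3}, with the role previously played by $V(G)\setminus F$ now played by all of $V(G)$; the ``maximality of $F$" step is correspondingly replaced by direct appeals to the global potential bounds~\eqref{eq-a}, \eqref{eq-b} and to Lemma~\ref{LM-M-3} itself.

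For the independence of $Y$, I would suppose $u,v\in Y$ are adjacent and denote the unique other neighbors of $u$ and $v$ by $u'$ and $v'$. By ${\bf c}$-criticality, $G-\{u,v\}$ admits a $({\bf c},\HH_{G-\{u,v\}})$-coloring $\phi$, which I would extend by picking $\phi(u)\in L(u)$ with $\phi(u)\nsim\phi(u')$ and $\phi(v)\in L(v)$ with $\phi(v)\nsim\phi(v')$. The only possible contribution to the degree of $\phi(u)$ (resp.~$\phi(v)$) in the extended cover comes from $\phi(v)$ (resp.~$\phi(u)$), giving degree at most $1$; since the capacities at $u$ and $v$ are $(i,j)$ with $i\geq 3$, this yields a $({\bf c},\HH)$-coloring of $G$, a contradiction.

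For the independence of $X$, I would suppose $u,v\in X$ are adjacent, set $G'=G-uv$, and define ${\bf c}'$ by decreasing each coordinate of ${\bf c}$ at $u$ and at $v$ by $1$; this is legal because Lemma~\ref{LM-M-2}(i) gives ${\bf c}_1,{\bf c}_2\geq 0$. Any $({\bf c}',\HH_{G'})$-coloring of $G'$ is automatically a $({\bf c},\HH)$-coloring of $G$, since reinserting the edge $uv$ increases the degree at $\phi(u)$ and at $\phi(v)$ by at most $1$ each, absorbed by the unit slack built into ${\bf c}'$. It therefore suffices to prove $\rho(G',{\bf c}')\geq i-j$, which by minimality of $(G,{\bf c})$ will force $G'$ to be $({\bf c}',\HH_{G'})$-colorable. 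Suppose toward contradiction some $Q\subseteq V(G)$ has $\rho_{G',{\bf c}'}(Q)\leq i-j-1$. Noting that $\rho_{{\bf c}}(x)-\rho_{{\bf c}'}(x)$ equals $2$ for each $x\in\{u,v\}$ and $0$ elsewhere, and that exactly one edge is lost from $G$ to $G'$ inside $Q$ precisely when $\{u,v\}\subseteq Q$, I would split into three cases on $|Q\cap\{u,v\}|$. If $\{u,v\}\subseteq Q$ then $\rho_{G,{\bf c}}(Q)=\rho_{G',{\bf c}'}(Q)+4-(i+1)\leq 2-j<i-j$ because $i\geq 3$, contradicting~\eqref{eq-a} (or the assumption $\rho(G,{\bf c})\geq i-j$ when $Q=V(G)$). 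If exactly one of $u,v$ lies in $Q$ then $Q$ is a nonempty proper subset of $V(G)$ and $\rho_{G,{\bf c}}(Q)\leq\rho_{G',{\bf c}'}(Q)+2\leq i-j+1$, so Lemma~\ref{LM-M-3} forces $V(G)\setminus Q$ to be a single $(2;i,j)$-vertex, contradicting that the omitted endpoint of $uv$ lies in $X$. Finally, if $Q\cap\{u,v\}=\varnothing$ then $\rho_{G,{\bf c}}(Q)=\rho_{G',{\bf c}'}(Q)\leq i-j-1$, again contradicting~\eqref{eq-a}.

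The only slightly delicate step is the middle case, where Lemma~\ref{LM-M-3} is used exactly once to pin down $V(G)\setminus Q$, and where the hypothesis ``$u,v\in X$ are not $(2;i,j)$-vertices'' is what produces the contradiction; the remaining steps are essentially a verbatim copy of the local argument inside Lemma~\ref{LM-M-3}, which is why the authors omit the details.
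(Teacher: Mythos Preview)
Your proposal is correct and follows essentially the approach the paper intends: it mirrors Claim~\ref{clm:1} from Lemma~\ref{LM-M-3}, with the maximality-of-$F$ step in the ``exactly one of $u,v$ in $Q$'' case replaced by a direct invocation of Lemma~\ref{LM-M-3}, which is precisely the natural global substitute. The case analysis and potential bookkeeping are all accurate.
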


Given $A\subseteq X$, let $N_2(A)$ denote the set of vertices $v$ in $Y$ such that $|N(v)\cap A|=2$. Let $G_A$ be the subgraph of $G$ induced by $A\cup N_2(A)$. By Lemmas~\ref{LM-M-3} and \ref{LM-M-4}, we have
\begin{equation}\label{eq:bigraph}
\rho_{G,{\bf c}}(G_{X'})=\sum_{v\in X'} \rho_{{\bf c}}(v)-|N_2(X')|>i-j+1 \text{, for every }X'\subsetneq X
\end{equation}
Let $\mathscr{G}$ be the collection of functions $g:Y\to X$ such that $g(y)\in N(y)$ for every $y\in Y$. For each $x\in X$, let $\lambda(x,g)=|g^{-1}(x)|$,
and for each $y\in Y$, let $x_{y,g}$ denote
 the vertex in $N(y)\setminus \{g(y)\}$.

\begin{lemma}\label{lem:gluing}
For every $g\in\mathscr{G}$ there is $x\in X$ such that $\lambda(x,g)> {\bf c}_2(x)$.
\end{lemma}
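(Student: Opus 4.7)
The plan is to argue by contradiction. Suppose some $g \in \mathscr{G}$ satisfies $\lambda(x, g) \leq {\bf c}_2(x)$ for every $x \in X$, fix an arbitrary (full) $2$-fold cover $\HH = (L, H)$ of $G$, and build an explicit $({\bf c}, \HH)$-coloring of $G$. This will contradict ${\bf c}$-criticality. Two preliminary facts drive the construction. First, by Lemma~\ref{LM-M-4} both $X$ and $Y$ are independent, so $G$ is bipartite with parts $X$ and $Y$, and every $y \in Y$ has both of its two neighbors (namely $g(y)$ and $x_{y, g}$) in $X$. Second, because $\HH$ is full and $2$-fold, for every edge $uv \in E(G)$ the bipartite graph $H[L(u), L(v)]$ is a perfect $2$-edge matching, so each vertex of $L(u)$ has exactly one neighbor and exactly one non-neighbor in $L(v)$.

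Given this, I will define $\phi : V(G) \to V(H)$ as follows. On the $X$-side, set $\phi(x) = r(x)$ for every $x \in X$; this is admissible because ${\bf c}_2(x) \geq 0$ by Lemma~\ref{LM-M-2}(i). On the $Y$-side, for each $y \in Y$ let $\phi(y)$ be the unique vertex of $L(y)$ that is \emph{not} adjacent to $r(x_{y, g})$ in $H$. Since the sets $L(v)$ partition $V(H)$, $\phi$ is automatically an injective $\HH$-map, and it remains only to verify the defect conditions at every vertex of $G$.

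For $y \in Y$, which is a $(2; i, j)$-vertex, the degree of $\phi(y)$ in $H_\phi$ is at most $d_G(y) = 2$, well within both ${\bf c}_1(y) = i$ and ${\bf c}_2(y) = j$ since $i \geq 3$. For $x \in X$, the only possible neighbors of $r(x) = \phi(x)$ in $H_\phi$ lie in $\{\phi(y) : y \in N_G(x)\}$. If $y \in N_G(x)$ has $g(y) = x$, then $\phi(y)$ was selected based on $x_{y, g} \neq x$, so its contribution to the degree of $r(x)$ is at most $1$; if $y \in N_G(x)$ has $g(y) \neq x$, then $x = x_{y, g}$ and $\phi(y)$ was chosen to be the non-neighbor of $r(x)$, contributing $0$. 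Hence the degree of $r(x)$ in $H_\phi$ is at most $|g^{-1}(x)| = \lambda(x, g) \leq {\bf c}_2(x)$, so $\phi$ is a valid $({\bf c}, \HH)$-coloring.

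The hard work has in fact already been done in Lemmas~\ref{LM-M-3} and \ref{LM-M-4}, which produced the bipartite structure with all non-$(2; i, j)$-vertices concentrated on the $X$-side; consequently there is no real combinatorial obstacle at this stage, only book-keeping. The two design choices that make the argument run are (a) always using the rich vertex $r(x)$ on the $X$-side, so the hypothesis on ${\bf c}_2(x)$ is exactly the capacity the coloring needs, and (b) using $g$ as the prescription for which of the two $X$-neighbors of each $y \in Y$ absorbs the possible unit of defect, deflecting it away from $x_{y, g}$ onto $g(y)$.
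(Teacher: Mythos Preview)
Your proof is correct and follows exactly the same approach as the paper's: assume $\lambda(x,g)\le {\bf c}_2(x)$ for all $x\in X$, color every $x\in X$ by $r(x)$, and for each $y\in Y$ choose $\phi(y)\in L(y)$ non-adjacent to $\phi(x_{y,g})$, then observe this is a $({\bf c},\HH)$-coloring. You simply supply more detail (invoking Lemmas~\ref{LM-M-2}(i) and~\ref{LM-M-4}, and spelling out the degree checks at $Y$- and $X$-vertices) than the paper's two-line version.
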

\begin{proof}
Suppose some $g\in\mathscr{G}$ satisfies $\lambda(x,g)\leq {\bf c}_2(x)$ for every $x\in X$. Define an $\HH$-map $\phi$ by: $\phi(x)=r(x)$ for every $x\in X$, $\phi(y)\nsim \phi(x_{y,g})$ for every $y\in Y$. Then $\phi$ is a $({\bf c}, \HH)$-coloring of $G$, a contradiction.
\end{proof}

Let $\widehat{G}$ be an auxiliary multigraph, where $V(\widehat{G})=X$ and $E(\widehat{G})=Y$, such that for every $a,b\in V(\widehat{G})$, each $y\in N(a)\cap N(b)$ corresponds bijectively to an edge between $a$ and $b$.
 Let $\mathscr{D}$ be the collection of all digraphs obtained by orienting the edges of $\widehat{G}$. Define the bijection
\begin{align*}
    \pi: \mathscr{G}\to\mathscr{D},
\end{align*}
so that for every $g\in \mathscr{G}$, the head of each edge $y\in E(\widehat{G})$ in $\pi(g)$ is $g(y)$.
For $g\in \mathscr{G}$, let $S_g := \{v\in X : \lambda(v,g)>{\bf c}_2(v) \}$ and let $S'_g$ be
 the set of the vertices $v\in X$ such that $\pi(g)$ has a directed path from  $v$ to $S_g$. By definition, $S'_g\supseteq S_g$, and by Lemma \ref{lem:gluing}, $|S_g|\geq1$ for all $g$.

Now we fix  $g_0\in \mathscr{G}$ such that
\begin{equation}\label{eq:ming}
\sum_{v\in X}\max\{0,\lambda(v,g_0)-{\bf c}_2(v)\}=\min_{g\in\mathscr{G}}\sum_{v\in X}\max\{0,\lambda(v,g)-{\bf c}_2(v)\}.
\end{equation}


\begin{lemma}\label{lem:s}
$|S_{g_0}'|\geq2$.
\end{lemma}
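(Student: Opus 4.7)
The plan hinges on the observation that $S_{g_0}' \supseteq S_{g_0}$ by definition, together with the simple structural fact that any vertex in $S_{g_0}$ must receive at least one in-edge in the orientation $\pi(g_0)$, whose tail is then forced to be a distinct vertex of $X$ also lying in $S_{g_0}'$. So instead of extracting a long directed path, I only need to exhibit one additional vertex adjacent to $S_{g_0}$ via an in-edge.

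First, I would pick any $v_0 \in S_{g_0}$; such a $v_0$ exists by Lemma~\ref{lem:gluing}, which guarantees $|S_{g_0}| \geq 1$. By Lemma~\ref{LM-M-2}(i), ${\bf c}_2(v_0) \geq 0$, so the defining inequality $\lambda(v_0, g_0) > {\bf c}_2(v_0)$ forces $\lambda(v_0, g_0) \geq 1$. Unpacking the definition of $\lambda$, there is some $y \in Y$ with $g_0(y) = v_0$, i.e., an in-edge at $v_0$ in $\pi(g_0)$.

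Next, I would examine the edge of the auxiliary multigraph $\widehat G$ corresponding to this $y$. By Lemma~\ref{LM-M-4} the two neighbors of $y$ in $G$ both lie in $X$, and they are distinct because $G$ is simple with $d_G(y) = 2$; consequently $\widehat G$ has no loop, and the tail $u$ of the in-edge at $v_0$ satisfies $u \in X \setminus \{v_0\}$. The directed edge $u \to v_0$ in $\pi(g_0)$ is a length-one directed path from $u$ to $S_{g_0}$, so $u \in S_{g_0}'$. Combined with $v_0 \in S_{g_0} \subseteq S_{g_0}'$, this gives $\{u, v_0\} \subseteq S_{g_0}'$ with $u \neq v_0$, hence $|S_{g_0}'| \geq 2$.

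The only subtlety I expect is verifying that $\widehat G$ carries no self-loops at $v_0$, which is exactly what prevents a degenerate configuration where the in-edge at $v_0$ could have tail $v_0$; this is supplied for free by simplicity of $G$ and Lemma~\ref{LM-M-4}. Notably, the argument does not invoke the minimality condition \eqref{eq:ming} that pins down $g_0$, which strongly suggests that \eqref{eq:ming} is reserved for the finer structural statements that come after this lemma.
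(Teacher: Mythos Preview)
Your proof is correct and follows essentially the same approach as the paper's. The paper argues by contradiction---assuming $S_{g_0}'=\{v\}$, it deduces $d^-_{\pi(g_0)}(v)=0$ (since any in-edge would have a distinct tail lying in $S_{g_0}'$), hence $\lambda(v,g_0)=0$, contradicting ${\bf c}_2(v)\geq 0$---while you phrase the same idea directly by exhibiting the tail of an in-edge as a second element of $S_{g_0}'$; your remark that the minimality condition \eqref{eq:ming} is not needed here is also consistent with the paper.
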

\begin{proof}
Suppose $S_{g_0}'=\{v\}$. Then by Lemma \ref{lem:gluing}, $S_{g_0}=\{v\}$ and $d^-_{\pi(g_0)}(v)=0$.
This means $\lambda(v,g_0)=0$, thus by the definition of  $S_{g_0}$, ${\bf c}_2(v)=-1$, a contradiction to Lemma \ref{LM-M-2}.
\end{proof}

\begin{lemma}\label{lem:A}
For every $x\in S_{g_0}'$, $\lambda(x,g_0)\geq {\bf c}_2(x)$.
\end{lemma}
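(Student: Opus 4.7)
The plan is to proceed by contradiction using a path-reversal argument, exploiting the minimality condition~\eqref{eq:ming}. Suppose some $x \in S_{g_0}'$ satisfies $\lambda(x, g_0) < {\bf c}_2(x)$. Since every $v \in S_{g_0}$ has $\lambda(v, g_0) > {\bf c}_2(v)$, the vertex $x$ must lie in $S_{g_0}' \setminus S_{g_0}$, so by definition of $S_{g_0}'$ there is a directed path in $\pi(g_0)$ from $x$ to some vertex of $S_{g_0}$. Choose a shortest such path $P\colon x = v_0 \to v_1 \to \cdots \to v_k$ with $v_k \in S_{g_0}$; minimality forces $v_0, \ldots, v_{k-1} \notin S_{g_0}$, so $\lambda(v_i, g_0) \leq {\bf c}_2(v_i)$ for $0 \leq i < k$.

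Next I would flip the orientations along $P$ to produce a new function $g_1 \in \mathscr{G}$. Concretely, if $y_i \in Y$ is the edge of $\widehat{G}$ realizing $v_i \to v_{i+1}$ in $\pi(g_0)$ (so $g_0(y_i) = v_{i+1}$), define $g_1(y_i) = v_i$, and let $g_1 = g_0$ elsewhere. Since $P$ is a path, the edges $y_0, \ldots, y_{k-1}$ are distinct, so $g_1$ is well-defined. The changes in the $\lambda$-values are exactly
\[
\lambda(x, g_1) = \lambda(x, g_0) + 1, \qquad \lambda(v_k, g_1) = \lambda(v_k, g_0) - 1,
\]
with $\lambda(v_i, g_1) = \lambda(v_i, g_0)$ for all other vertices, because each intermediate $v_i$ loses exactly one in-edge ($y_{i-1}$) and gains exactly one in-edge ($y_i$).

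Finally I would compare the objective $\sum_{v\in X}\max\{0, \lambda(v, g) - {\bf c}_2(v)\}$ under $g_0$ and $g_1$. At $x$, the assumption $\lambda(x, g_0) < {\bf c}_2(x)$ yields $\lambda(x, g_1) \leq {\bf c}_2(x)$, so the summand at $x$ remains $0$. At $v_k \in S_{g_0}$ the summand was $\lambda(v_k, g_0) - {\bf c}_2(v_k) \geq 1$ and becomes $\max\{0, \lambda(v_k, g_0) - 1 - {\bf c}_2(v_k)\}$, a drop of at least $1$. All other summands are unchanged. Hence the total strictly decreases, contradicting the choice of $g_0$ in~\eqref{eq:ming}.

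The only subtle point is the choice of a shortest path $P$ to guarantee $\lambda(v_i, g_0) \leq {\bf c}_2(v_i)$ at the interior vertices, which is what keeps the intermediate summands at zero before and after the flip; aside from that, the argument is a clean discrete-optimization swap. I expect no serious obstacle.
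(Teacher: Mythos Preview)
Your proof is correct and follows essentially the same path-reversal argument as the paper: assume a violating vertex, take a directed path to $S_{g_0}$, reverse it, and observe that the objective in~\eqref{eq:ming} drops. The only difference is that you insist on a \emph{shortest} path so that interior vertices lie outside $S_{g_0}$, but this is not actually needed: you already observed that $\lambda(v_i,g_1)=\lambda(v_i,g_0)$ at every interior vertex, so those summands are unchanged regardless of whether $v_i\in S_{g_0}$, and the paper simply takes any such path.
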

\begin{proof}
Suppose there is $v_0\in S_{g_0}'$ such that $\lambda(v_0,g_0)<{\bf c}_2(v_0)$. Let $P:=v_0v_1\dots v_t$ be a $v_0,v_t$-path, where  $v_t\in S_{g_0}$. Obtain $D\in\mathscr{D}$ from $\pi(g_0)$ by reversing all edges in $P$, and denote $\pi^{-1}(D)$ by $h$. Then
\[
\sum_{v\in X}\max\{0,\lambda(v,h)-{\bf c}_2(v)\}=\sum_{v\in X}\max\{0,\lambda(v,g_0)-{\bf c}_2(v)\}-1,
\]
contradicting (\ref{eq:ming}).
\end{proof}
\medskip

We say that a vertex $y\in Y$ adjacent to $u$ and $v$ in $G$ is \emph{even} (with respect to $\HH$), if in $H$, each vertex in $L(y)$ is adjacent either to both rich vertices in $L(u)\cup L(v)$, or to both poor vertices in $L(u)\cup L(v)$. Otherwise, we say $y$ is \emph{odd}.


\begin{lemma}\label{lem:SS}
For each $v\in S'_{g_0}$,  $\lambda(v,g_0)\leq {\bf c}_1(v)+{\bf c}_2(v)+1$.

\end{lemma}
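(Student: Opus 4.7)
The plan is to argue by contradiction: assume $\lambda(v_0, g_0) \geq {\bf c}_1(v_0) + {\bf c}_2(v_0) + 2$ for some $v_0 \in S'_{g_0}$, and construct a $({\bf c}, \HH)$-coloring of $G$ in order to contradict ${\bf c}$-criticality. The strategy is to extend the ``all rich'' coloring used in Lemma~\ref{lem:gluing} by switching $\phi(v_0)$ from $r(v_0)$ to $p(v_0)$, thereby making the capacity ${\bf c}_1(v_0)$ available at $v_0$, and then using the directed path $P = v_0 v_1 \cdots v_t$ with $v_t \in S_{g_0}$ (which exists because $v_0 \in S'_{g_0}$) to reroute any residual overload.

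First I will perform a parity analysis of $Z := g_0^{-1}(v_0)$, partitioning $Z = Z_{\mathrm{even}} \sqcup Z_{\mathrm{odd}}$ according to how the matching $H[L(y)\cup L(v_0)]$ aligns with $H[L(y)\cup L(a_y)]$ (in the spirit of the even/odd classification introduced before Lemma~\ref{lem:SS} and used in Claim~4 of Lemma~\ref{LM-M-3}). A direct case check will show: with $\phi(v_0) = p(v_0)$ and $\phi(a_y) = r(a_y)$, each odd $y \in Z$ can be colored to avoid both colored neighbors simultaneously (contributing $0$ to $\deg p(v_0)$), while each even $y \in Z$ is forced to contribute $+1$ either to $\deg p(v_0)$ or to $\deg r(a_y)$. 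Symmetrically, with $\phi(v_0) = r(v_0)$ the roles of even and odd swap. Hence if $|Z_{\mathrm{even}}| \leq {\bf c}_1(v_0)$ the $p$-coloring succeeds at $v_0$, and if $|Z_{\mathrm{odd}}| \leq {\bf c}_2(v_0)$ the $r$-coloring succeeds; otherwise $|Z_{\mathrm{even}}| \geq {\bf c}_1(v_0) + 1$ and $|Z_{\mathrm{odd}}| \geq {\bf c}_2(v_0) + 1$, which is the only case still requiring work.

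In that remaining case I will invoke the path $P$. Reversing the first edge $y_1 = v_0v_1$ of $P$ in $\pi(g_0)$ yields $g^*$, and by the minimality~\eqref{eq:ming} of $g_0$ together with Lemma~\ref{lem:A} at $v_1$ the total excess is preserved; the effect is to move one unit of in-load from $v_0$ to $v_1$. Iterating the reversal along $P$ transports the surplus all the way to $v_t \in S_{g_0}$, where the strict inequality $\lambda(v_t, g_0) > {\bf c}_2(v_t)$ absorbs it. At each step I set up a bipartite Hall-type instance whose left side is the unresolved even-$y$ overflow at $v_0$ and whose right side carries the slack at $p(v_0)$, at the $r(a_y)$'s, and at the vertices along $P$; a valid matching translates back into the desired $({\bf c}, \HH)$-coloring of $G$, giving the contradiction.

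The hard part will be verifying Hall's condition for that bipartite reassignment, because the slack at each $r(a)$ depends globally on how many odd $y \in g_0^{-1}(a)$ are present and on how the successive reversals along $P$ reshuffle them. I expect to resolve this by combining Lemmas~\ref{lem:gluing}, \ref{lem:s}, and~\ref{lem:A} with the defining minimality~\eqref{eq:ming} of $g_0$: a failure of Hall's condition on some subset would yield a set $T \subseteq V(P)$ on which the excess can be strictly decreased by a further reversal, violating \eqref{eq:ming}. This ensures the iterative procedure terminates with a valid coloring, completing the contradiction.
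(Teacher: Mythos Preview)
Your plan has a genuine gap, and the paper's proof takes an entirely different (and much shorter) route.

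\textbf{The gap.} Your whole strategy aims to repair the overload at the single vertex $v_0$, but it ignores every other vertex of $S_{g_0}$. In your ``easy'' cases ($|Z_{\mathrm{even}}|\le {\bf c}_1(v_0)$ or $|Z_{\mathrm{odd}}|\le {\bf c}_2(v_0)$) you say the coloring ``succeeds at $v_0$'', but for each $x\in S_{g_0}\setminus\{v_0\}$ you still have $\phi(x)=r(x)$ with $\lambda(x,g_0)>{\bf c}_2(x)$, so $\phi$ is not a $({\bf c},\HH)$-coloring of $G$. Nothing in your argument touches those vertices, and at this point in the paper $|S_{g_0}|$ can certainly be large (indeed Lemma~\ref{lem:S}, proved \emph{after} this lemma, shows $|S_{g_0}|>1$).

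Your ``hard'' case compounds the problem. First, since ${\bf c}_1(v_0)\geq 0$ (Lemma~\ref{LM-M-2}) we have $\lambda(v_0,g_0)\geq {\bf c}_2(v_0)+2>{\bf c}_2(v_0)$, so $v_0\in S_{g_0}$ already and the path $P$ you need may be trivial. Second, the direction of load transfer is backwards: reversing the directed edge $v_0\to v_1$ of $\pi(g_0)$ changes $g_0(y_1)=v_1$ to $g^*(y_1)=v_0$, which \emph{increases} $\lambda(v_0,\cdot)$ and \emph{decreases} $\lambda(v_1,\cdot)$, the opposite of what you claim. And the terminus $v_t\in S_{g_0}$ satisfies $\lambda(v_t,g_0)>{\bf c}_2(v_t)$; an overloaded vertex has no slack to ``absorb'' anything. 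The Hall argument built on top of this is therefore not salvageable as stated.

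\textbf{What the paper does instead.} The paper never tries to color $G$. It uses a pure potential count: by Lemma~\ref{lem:A} every $x\in S'_{g_0}$ has $\lambda(x,g_0)\ge {\bf c}_2(x)$, and by definition of $S'_{g_0}$ no edge of $\pi(g_0)$ leaves $S'_{g_0}$, so $\sum_{x\in S'_{g_0}}\lambda(x,g_0)=|N_2(S'_{g_0})|$. Adding the assumed extra ${\bf c}_1(v)+2$ at $v$ gives
\[
|N_2(S'_{g_0})|\ \ge\ {\bf c}_1(v)+2+\sum_{x\in S'_{g_0}}{\bf c}_2(x),
\]
and plugging this into $\rho_{G,{\bf c}}(G_{S'_{g_0}})$ together with $|S'_{g_0}|\ge 2$ (Lemma~\ref{lem:s}) and $j\ge 2i+1$ forces $\rho_{G,{\bf c}}(G_{S'_{g_0}})\le i-j-1$, contradicting the choice of $G$. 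This is a four-line argument with no coloring, no parity analysis, and no Hall condition.
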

\begin{proof} Suppose there is $v\in S'_{g_0}$ with  $\lambda(v,g_0)\geq {\bf c}_1(v)+{\bf c}_2(v)+2$.
By Lemma \ref{lem:A} and the definition of $S'_{g_0}$,
\[
2+{\bf c}_1(v)+\sum_{x\in S_{g_0}'}{\bf c}_2(x)\leq\sum_{x\in S_{g_0}'}\lambda(x,g_0)=|N_2(S_{g_0}')|.
\]
Therefore, by Lemma~\ref{lem:s},
\begin{align*}
\rho_{G,{\bf c}}(G_{S_{g_0}'})&=(i-j+1)|S_{g_0}' |+\sum_{x\in S_{g_0}' }{\bf c}_1(x)+\sum_{x\in S_{g_0}' }{\bf c}_2(x)-|N_2(S_{g_0}' )|\\
&\leq (i-j+1)|S_{g_0}' |+(|S_{g_0}' |-1)i-2\\
&=i-j-1+(|S_{g_0}' |-1)(2i-j+1)\leq i-j-1,
\end{align*}
which contradicts (\ref{eq:bigraph}) or the choice of $G$.
\end{proof}

\begin{lemma}\label{lem:S}
$|S_{g_0}|>1$.
\end{lemma}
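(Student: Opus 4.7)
Assume for contradiction that $|S_{g_0}|=1$, say $S_{g_0}=\{v\}$; we will construct an explicit $({\bf c},\HH)$-coloring of $G$, contradicting its ${\bf c}$-criticality. Let $\lambda^{\mathrm{odd}}(x,g)$ (resp.\ $\lambda^{\mathrm{even}}(x,g)$) denote the number of odd (resp.\ even) $y\in Y$ with $g(y)=x$. The key inequality to establish is $\lambda^{\mathrm{even}}(v,g_0)\leq{\bf c}_1(v)$. Consider the ``all-rich'' coloring induced by $g_0$: color each $x\in X$ by $r(x)$ and each $y\in Y$ by the unique element of $L(y)$ not adjacent to $r(x_{y,g_0})$. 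A case check by parity shows that the degree of $r(x)$ in the image of this coloring equals $\lambda^{\mathrm{odd}}(x,g_0)$, as only odd $y$'s with $g_0(y)=x$ contribute (the even $y$'s choose the ``all-poor'' color and so are not adjacent to any $r(\cdot)$). Since $G$ is not ${\bf c}$-colorable, some capacity must be violated; because $\lambda^{\mathrm{odd}}(u,g_0)\leq\lambda(u,g_0)\leq{\bf c}_2(u)$ for every $u\neq v$ (from $|S_{g_0}|=1$), the violation must occur at $v$, i.e.\ $\lambda^{\mathrm{odd}}(v,g_0)\geq{\bf c}_2(v)+1$. Combining this with Lemma~\ref{lem:SS}'s bound $\lambda(v,g_0)\leq{\bf c}_1(v)+{\bf c}_2(v)+1$ yields $\lambda^{\mathrm{even}}(v,g_0)\leq{\bf c}_1(v)$.

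Now attempt the ``$v$-poor'' coloring $\phi$: set $\phi(v)=p(v)$ and $\phi(x)=r(x)$ for every $x\in X\setminus\{v\}$. Based on the parity of each $y\in Y$ and whether $y$ is adjacent to $v$, one checks that (i) each odd $y$ adjacent to $v$ and each even $y$ non-adjacent to $v$ can be colored to contribute $0$ to both its $X$-neighbors, (ii) each even $y$ adjacent to $v$ (other neighbor $u$) must contribute $+1$ to exactly one of $p(v),r(u)$, and (iii) each odd $y$ non-adjacent to $v$ (neighbors $u,w$) must contribute $+1$ to exactly one of $r(u),r(w)$. Let $E_v$ and $O_{nv}$ denote the sets of $y$'s in cases (ii) and (iii). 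Choosing the $\phi(y)$'s then reduces to a bipartite assignment problem: each $y\in Y'':=E_v\cup O_{nv}$ must be assigned to a recipient in $N(y)\subseteq X$, respecting capacities ${\bf c}_1(v)$ at $v$ and ${\bf c}_2(u)$ at $u\neq v$.

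By Hall's theorem (for capacitated bipartite matching), a valid assignment exists iff for every $T\subseteq X$ the number of $y\in Y''$ whose two choices both lie in $T$ is at most the total capacity of $T$. For $T\subseteq X\setminus\{v\}$, only $y\in O_{nv}$ with both neighbors in $T$ contribute; each such $y$ has $g_0(y)\in T$, giving count at most $\sum_{u\in T}\lambda(u,g_0)\leq\sum_{u\in T}{\bf c}_2(u)$. For $T\ni v$, write $T'=T\setminus\{v\}$ and partition the even $y$'s adjacent to $v$ with other neighbor in $T'$ according to whether $g_0(y)=v$ or $g_0(y)\in T'$: the first subfamily has size at most $\lambda^{\mathrm{even}}(v,g_0)\leq{\bf c}_1(v)$; the second, together with the $y\in O_{nv}$ having both neighbors in $T'$, forms a disjoint subfamily of $\{y:g_0(y)\in T'\}$ of size at most $\sum_{u\in T'}\lambda(u,g_0)\leq\sum_{u\in T'}{\bf c}_2(u)$. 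Summing these two bounds gives the required Hall inequality, so a valid $({\bf c},\HH)$-coloring of $G$ exists, contradicting ${\bf c}$-criticality. The main obstacle is the parity-based case analysis together with the careful bookkeeping in the $v\in T$ case, where the bound ${\bf c}_1(v)+\sum_{u\in T'}{\bf c}_2(u)$ must be matched by partitioning the contributions of $Y''$ into a ${\bf c}_1(v)$ part and a $\sum_{u\in T'}{\bf c}_2(u)$ part, aligned with the split of $g_0$-preimages.
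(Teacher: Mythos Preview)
Your proof is correct and shares the paper's overall strategy: use the all-rich map $\phi_1$ to force $\lambda^{\mathrm{odd}}(v,g_0)\geq{\bf c}_2(v)+1$, invoke Lemma~\ref{lem:SS} to get $\lambda^{\mathrm{even}}(v,g_0)\leq{\bf c}_1(v)$, and then produce a $v$-poor $({\bf c},\HH)$-coloring. The difference is only in this last step. You set up a capacitated assignment problem and verify Hall's condition case by case, whereas the paper simply takes the explicit map $\phi_2$ given by $\phi_2(v)=p(v)$, $\phi_2(x)=r(x)$ for $x\neq v$, and $\phi_2(y)\nsim\phi_2(x_{y,g_0})$ for every $y\in Y$. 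In your language this is precisely the assignment $y\mapsto g_0(y)$: it loads $v$ with exactly the even $y\in g_0^{-1}(v)$ (at most ${\bf c}_1(v)$ of them) and each $u\neq v$ with at most $\lambda(u,g_0)\leq{\bf c}_2(u)$ vertices, so all capacities are met directly and the Hall argument is unnecessary. Your route is not wrong---indeed your Hall verification implicitly rediscovers this witness---but once you have $\lambda^{\mathrm{even}}(v,g_0)\leq{\bf c}_1(v)$ you can shortcut the entire second half by observing that the $g_0$-induced assignment already works.
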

\begin{proof}
Suppose $S_{g_0}=\{v\}$.
Let $\phi_1$, $\phi_2$ be $\HH$-maps defined by: $\phi_1(x)=\phi_2(x) =\ r(x)$ for $x\in X \setminus \{v\}$, $\phi_1(v) = r(v), \phi_2(v) = p(v)$; for every $y\in Y$,  $\phi_1(y)\nsim \phi_1(x_{y,g_0})$ and $\phi_2(y)\nsim \phi_2(x_{y,g_0})$. If $g_0^{-1}(v)$ contains at most ${\bf c}_2(v)$ odd vertices, then
$\phi_1$ is a $({\bf c}, \HH)$-coloring, a contradiction. Similarly, if  $g_0^{-1}(v)$ contains at most ${\bf c}_1(v)$ even vertices, then
$\phi_2$ is a $({\bf c}, \HH)$-coloring.
Hence $g_0^{-1}(v)$ contains at least ${\bf c}_1(v)+1$ even vertices and at least ${\bf c}_2(v)+1$ odd vertices. Thus $\lambda(v,g_0)\geq {\bf c}_1(v)+{\bf c}_2(v)+2$. This contradicts Lemma~\ref{lem:SS}.
\end{proof}

\medskip
For $v\in X$,
let $\lambda_S(v,g):=|g^{-1}(v)\cap N_2(S_{g})|$, and $\lambda_{\overline{S}}(v,g):=|g^{-1}(v)\cap (Y\setminus N_2(S_{g}))|$. Thus $\lambda(v,g)=\lambda_S(v,g)+\lambda_{\overline{S}}(v,g)$ for every $v\in X$. The proof goes by induction on $|S_{g_0}|$. Lemma~\ref{lem:S} provides
the base of induction. Now we do an induction step.

If for some  $v\in S_{g_0}$, $\lambda_S(v,g_0)\geq \lambda(v,g_0)-{\bf c}_2(v)$,
 consider the digraph $D$ obtained from $\pi(g_0)$ by reversing $\lambda(v,g_0)-{\bf c}_2(v)$ edges in $\pi(g_0)[S_{g_0}]$ each with head vertex $v$, and let $h=\pi^{-1}(D)$. Then
 $\sum_{v\in X}\max\{0,\lambda(v,g_0)-{\bf c}_2(v)\}=\sum_{v\in X}\max\{0,\lambda(v,h)-{\bf c}_2(v)\}$
 and $S_h=S_{g_0}\setminus\{v\}$, so $|S_h|=|S_{g_0}|-1$. Hence by induction assumption, the theorem holds. Thus we may assume that
\begin{equation}\label{new10}
\mbox{\em for every $v\in S_{g_0}$, $\lambda_S(v,g_0)<\lambda(v,g_0)-{\bf c}_2(v)$.}
\end{equation}

Let $Y_e$ be the set of even vertices in $Y$, and  $Y_o$ be the set of odd vertices in $Y$. For every $g\in\mathscr{G}$ and every $v\in X$, let $\lambda^e_{\overline{S}}(v,g)=|g^{-1}(v)\cap (Y^e\setminus N_2(S_{g}))|$ and  $\lambda^o_{\overline{S}}(v,g)=|g^{-1}(v)\cap (Y^o\setminus N_2(S_{g}))|$,
 so for every $v\in X$, $\lambda_{\overline{S}}(v,g)=\lambda^e_{\overline{S}}(v,g)+\lambda^o_{\overline{S}}(v,g)$.
Define
$$T_{g_0}=\{v\in S_{g_0}\, :\,\lambda_{\overline{S}}^e(v,g_0)+\lambda_S(v,g_0)\geq {\bf c}_1(v)+1\}\qquad \mbox{and}$$
$$R_{g_0}=\{u\in S_{g_0}\, :\, \lambda_{\overline{S}}^o(u,g_0)+\lambda_S(u,g_0)\geq {\bf c}_2(u)+1\}.$$

\begin{lemma}\label{ma-new-lm4}
$R_{g_0}\cap T_{g_0} \neq \varnothing$.
\end{lemma}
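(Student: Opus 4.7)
The plan is a proof by contradiction. Assume $R_{g_0} \cap T_{g_0} = \varnothing$; I shall construct a $({\bf c}, \HH)$-coloring of $G$, contradicting the ${\bf c}$-criticality of $G$.

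The coloring will be built explicitly from $g_0$. Partition $X$ as $X_p \sqcup X_r$, where $X_p := S_{g_0} \setminus T_{g_0}$ and $X_r := T_{g_0} \cup (X \setminus S_{g_0})$, and define an $\HH$-map $\phi$ by $\phi(v) = p(v)$ for $v \in X_p$, $\phi(v) = r(v)$ for $v \in X_r$, and for each $y \in Y$ let $\phi(y)$ be the unique vertex of $L(y)$ with $\phi(y) \nsim \phi(x_{y, g_0})$. The design is dictated by the two bounds available under the contradiction hypothesis: for $v \in X_p = S_{g_0} \setminus T_{g_0}$, the assumption $v \notin T_{g_0}$ gives $\lambda_S(v, g_0) + \lambda^e_{\overline{S}}(v, g_0) \leq {\bf c}_1(v)$, while for $v \in T_{g_0}$, the assumption $R_{g_0} \cap T_{g_0} = \varnothing$ forces $v \notin R_{g_0}$ and hence $\lambda_S(v, g_0) + \lambda^o_{\overline{S}}(v, g_0) \leq {\bf c}_2(v)$.

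To verify the validity of $\phi$, note that $X$ and $Y$ are independent by Lemma~\ref{LM-M-4}, and each $y \in Y$ has $d_G(y) = 2$, so $\deg_{H_\phi}(\phi(y)) \leq 2 \leq \min\{i, j\}$ is automatic. Hence it suffices to bound $\deg_{H_\phi}(r(v))$ for $v \in X_r$ and $\deg_{H_\phi}(p(v))$ for $v \in X_p$. For $v \in X \setminus S_{g_0}$ the crude bound $\deg_{H_\phi}(r(v)) \leq \lambda(v, g_0) \leq {\bf c}_2(v)$ is immediate, since only $y \in g_0^{-1}(v)$ can contribute (when $v = x_{y, g_0}$ one has $\phi(y) \nsim \phi(v) = r(v)$ by construction). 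For $v \in T_{g_0}$, a short parity case-analysis on each $y \in g_0^{-1}(v)$ will show that $y$ contributes to $\deg_{H_\phi}(r(v))$ only when either $x_{y, g_0} \in S_{g_0}$ (contributing at most $\lambda_S(v, g_0)$ terms in total) or $x_{y, g_0} \notin S_{g_0}$ with $y$ odd (contributing at most $\lambda^o_{\overline{S}}(v, g_0)$ terms), which yields $\deg_{H_\phi}(r(v)) \leq {\bf c}_2(v)$. The analysis for $v \in X_p$ is symmetric with parity swapped, giving $\deg_{H_\phi}(p(v)) \leq \lambda_S(v, g_0) + \lambda^e_{\overline{S}}(v, g_0) \leq {\bf c}_1(v)$.

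The main obstacle is carrying out the parity case-analysis carefully. For each $y$ and each choice of $v \in X_r$ or $v \in X_p$, the four subcases obtained by combining whether $y$ is even or odd with whether $x_{y, g_0}$ lies in $X_p$ or $X_r$ must be examined to determine when $\phi(y)$, picked to miss $\phi(x_{y, g_0})$, is or is not adjacent to the color $\phi(v)$. Once this is tabulated, the key dictionary ``contributing $y$'s with $x_{y,g_0} \in S_{g_0}$ are bounded by $\lambda_S$; contributing $y$'s with $x_{y,g_0} \notin S_{g_0}$ are of a single fixed parity'' drops out, at which point the inequalities defining $T_{g_0}$ and $R_{g_0}$ close the argument.
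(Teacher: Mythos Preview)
Your proof is correct and follows essentially the same strategy as the paper: assume $R_{g_0}\cap T_{g_0}=\varnothing$, split $X$ into a ``poor'' part and a ``rich'' part, color $Y$ to avoid the tail vertex $x_{y,g_0}$, and verify the capacity constraints via the parity analysis that isolates $\lambda_S$, $\lambda^e_{\overline S}$, $\lambda^o_{\overline S}$. The only difference is cosmetic: the paper sets the poor set to be $R_{g_0}$ (using $R_{g_0}\cap T_{g_0}=\varnothing$ to get $v\notin T_{g_0}$ for poor $v$), whereas you set the poor set to be $S_{g_0}\setminus T_{g_0}$ (using the disjointness to get $v\notin R_{g_0}$ for rich $v\in T_{g_0}$); the two choices are dual and the verification is line-for-line the same.
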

\begin{proof}
Suppose $R_{g_0}\cap T_{g_0} = \varnothing$.
Let $\phi$ be an $\HH$-map such that $\phi(v) = r(v)$ for $v\in X\setminus R_{g_0}$, $\phi(x) = p(x)$ for $x\in R_{g_0}$, and $\phi(y)\nsim \phi(x_{y,g_0})$ for $y\in Y$.

Let us check that the degree of $\phi(v)$ in $ H_\phi$ is at most ${\bf c}(\phi(v))$ for each $v\in V(G)$. This is true for each $y\in Y$ because $\phi(y)$ has at most one neighbor in $ H_\phi$, and ${\bf c}_2(y)>{\bf c}_1(y)=i\geq 3$. This is true for each $x\in X-S_{g_0}$ because
$\phi(x)$ has at most $\lambda(x,g_0)$ neighbors in $ H_\phi$, ${\bf c}(\phi(x))={\bf c}_2(x)$, and $\lambda(x,g_0)\leq {\bf c}_2(x)$.
Suppose $v\in S_{g_0}\setminus R_{g_0}$. Then  $\phi(v) = r(v)$, so
${\bf c}(\phi(v))={\bf c}_2(v)$. If $\phi(y)$ is a neighbor of $\phi(v)$ in $\HH$, and the  neighbor $x$ of $y$ in $G$ distinct from $v$ is not  in $S_{g_0}$,
then $\phi(x)=r(x)$, and $\phi(y)\nsim \phi(x)$. Hence in order $\phi(y)$ to be a neighbor of $\phi(v)$, vertex $y$ needs to be odd. The total number of such neighbors is $\lambda_{\overline{S}}^o(v,g_0)$. By the definition of $R_{g_0}$,  $\lambda_{\overline{S}}^o(v,g_0)+\lambda_S(v,g_0)\leq {\bf c}_2(v)$,
thus our claim holds for $v$. Finally, if $u\in  R_{g_0}$, then  $\phi(u) = p(u)$ and
${\bf c}(\phi(u))={\bf c}_1(u)$. Symmetrically to above, the total number of neighbors $\phi(y)$  of $\phi(u)$ in $\HH$ such that the
neighbor $x$ of $y$ in $G$ distinct from $u$ is not  in $S_{g_0}$ is $\lambda_{\overline{S}}^e(u,g_0)$. Since $u\notin T_{g_0}$,
 $\lambda_{\overline{S}}^e(u,g_0)+\lambda_S(u,g_0)\leq {\bf c}_1(u)$.
Thus, $\phi$ is a $({\bf c},\HH)$-coloring of $G$, a contradiction.
\end{proof}

By definition, for every $v\in T_{g_0}\cap R_{g_0}$,
$$( {\bf c}_1(v)+1)+( {\bf c}_2(v)+1)\leq (\lambda_{\overline{S}}^e(v,g_0)+\lambda_S(v,g_0))+
(\lambda_{\overline{S}}^o(v,g_0)+\lambda_S(v,g_0))
$$
$$=\lambda_{\overline{S}}(v,g_0)+2\lambda_S(v,g_0)=\lambda(v,g_0)+\lambda_S(v,g_0).
$$
By~\eqref{new10}, this is at most $2\lambda(v,g_0)- {\bf c}_2(v)-1$.
Therefore, for every $v\in T_{g_0}\cap R_{g_0}$,
\begin{equation}\label{eq:key}
    \lambda(v,g_0)\geq {\bf c}_2(v)+\frac{{\bf c}_1(v)}{2}+\frac{3}{2}.
\end{equation}

\begin{lemma}\label{notreallynew-lm5}
$|R_{g_0}\cap T_{g_0}|\leq 1$.
\end{lemma}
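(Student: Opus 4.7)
The plan is to argue by contradiction using the potential $\rho_{G,{\bf c}}(G_{S'_{g_0}})$, combining the strong lower bound (\ref{eq:key}) for vertices in $R_{g_0}\cap T_{g_0}$ with Lemma~\ref{lem:A} for the remaining vertices of $S'_{g_0}$. So I suppose there exist two distinct vertices $u,v\in R_{g_0}\cap T_{g_0}$ and aim to force $\rho_{G,{\bf c}}(G_{S'_{g_0}})$ below what either (\ref{eq:bigraph}) or the assumption $\rho(G,{\bf c})\geq i-j$ allows.

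The first ingredient I need is the identity
\[
\sum_{x\in S'_{g_0}}\lambda(x,g_0)=|N_2(S'_{g_0})|,
\]
which was already used implicitly in the proof of Lemma~\ref{lem:SS}. I would verify it by the backward closure of $S'_{g_0}$ in the digraph $\pi(g_0)$: if $g_0(y)\in S'_{g_0}$, then prepending the edge $x_{y,g_0}\to g_0(y)$ to any directed path from $g_0(y)$ to $S_{g_0}$ exhibits $x_{y,g_0}\in S'_{g_0}$, so both $G$-neighbors of $y$ lie in $S'_{g_0}$; conversely, every $y\in N_2(S'_{g_0})$ has $g_0(y)\in S'_{g_0}$.

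Applying (\ref{eq:key}) to each of $u,v$ and Lemma~\ref{lem:A} to the remaining $x\in S'_{g_0}$, the identity above yields
\[
|N_2(S'_{g_0})|\geq \sum_{x\in S'_{g_0}}{\bf c}_2(x)+\frac{{\bf c}_1(u)+{\bf c}_1(v)}{2}+3.
\]
Substituting into
\[
\rho_{G,{\bf c}}(G_{S'_{g_0}})=(i-j+1)|S'_{g_0}|+\sum_{x\in S'_{g_0}}({\bf c}_1(x)+{\bf c}_2(x))-|N_2(S'_{g_0})|,
\]
the ${\bf c}_2$-sums cancel; bounding $\sum_{x\in S'_{g_0}\setminus\{u,v\}}{\bf c}_1(x)+\frac{{\bf c}_1(u)+{\bf c}_1(v)}{2}\leq i|S'_{g_0}|-i$ via ${\bf c}_1(x)\leq i$ then gives
\[
\rho_{G,{\bf c}}(G_{S'_{g_0}})\leq (2i-j+1)|S'_{g_0}|-i-3.
\]
Because $j\geq 2i+1$ makes $2i-j+1\leq 0$, and $|S'_{g_0}|\geq 2$ by Lemma~\ref{lem:s}, this is at most $3i-2j-1\leq i-j-2$.

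It remains to split on whether $S'_{g_0}=X$ or $S'_{g_0}\subsetneq X$. Since $Y$ is independent (Lemma~\ref{LM-M-4}) and every $y\in Y$ has degree $2$ with both neighbors in $X$, $N_2(X)=Y$ and $V(G_X)=V(G)$; thus $S'_{g_0}=X$ forces $\rho_{G,{\bf c}}(V(G))\leq i-j-2$, contradicting $\rho(G,{\bf c})\geq i-j$. If instead $S'_{g_0}\subsetneq X$, then $V(G_{S'_{g_0}})\subsetneq V(G)$ and the bound contradicts (\ref{eq:bigraph}). The only real subtlety in this plan is the exactness of the closure identity in the first step (as opposed to a one-sided inequality, which would be too weak); everything afterwards is routine arithmetic exploiting $j\geq 2i+1$.
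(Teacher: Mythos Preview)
Your proposal is correct and follows essentially the same route as the paper: assume two vertices $u,v\in R_{g_0}\cap T_{g_0}$, combine the identity $\sum_{x\in S'_{g_0}}\lambda(x,g_0)=|N_2(S'_{g_0})|$ with (\ref{eq:key}) for $u,v$ and Lemma~\ref{lem:A} for the rest, and push $\rho_{G,{\bf c}}(G_{S'_{g_0}})$ down to $i-j-2$. The paper additionally exploits $\lambda(x,g_0)\geq {\bf c}_2(x)+1$ for $x\in S_{g_0}\setminus\{u,v\}$, picking up an extra $|S_{g_0}|-2$ term, but as your computation shows this refinement is not needed to reach the contradiction.
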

\begin{proof}
Suppose there are distinct $u,v\in R_{g_0}\cap T_{g_0}$. By (\ref{eq:key}) and Lemma~\ref{lem:A},

\begin{align*}
 |N_2(S_{g_0}' )|&=\sum_{x\in S_{g_0}' }\lambda(x,g_0)
 =\lambda(u,g_0)+\lambda(v,g_0)+\sum_{x\in S_{g_0}' \setminus S_{g_0}}\lambda(x,g_0)+\sum_{x\in S_{g_0}\setminus \{u,v\}}\lambda(x,g_0)\\
 &\geq |S_{g_0}|-2+\sum_{x\in S_{g_0}' }{\bf c}_2(x)+\frac{{\bf c}_1(u)+{\bf c}_1(v)}{2}+3.
\end{align*}
Therefore,
\begin{align*}
\rho_{G,{\bf c}}(S_{g_0}' )&=(i-j+1)|S_{g_0}' |+\sum_{x\in S_{g_0}' }{\bf c}_1(x)+\sum_{x\in S_{g_0}' }{\bf c}_2(x)-|N_2(S_{g_0}' )|\\
&\leq (i-j+1)|S_{g_0}' |+(|S_{g_0}' |-1)i-|S_{g_0}|-1\\
&\leq i-j-1+(|S_{g_0}' |-1)\big(2i-j+1\big)-1\leq i-j-2,
\end{align*}
contradicting the choice of $G$.
\end{proof}

Now the only remaining case is that  $|R_{g_0}\cap T_{g_0}| = 1$. Let $R_{g_0}\cap T_{g_0} = \{v\}$.

Define  $\HH$-maps $\phi_1$ and $\phi_2$  as follows:
$\phi_1(x)=\phi_2(x) =\ r(x)$ for every $x\in X\setminus R_{g_0}$, $\phi_1(x)=\phi_2(x) = p(x)$ for all $x\in R_{g_0}\setminus\{v\}$,
$\phi_1(v) = r(v), \phi_2(v) = p(v)$, and for every $y\in Y$,  $\phi_1(y)\nsim \phi_1(x_{y,g_0})$ and $\phi_2(y)\nsim \phi_2(x_{y,g_0})$.

 If $g_0^{-1}(v)$ contains at most ${\bf c}_1(v)$ even vertices, then repeating the proof of Lemma~\ref{ma-new-lm4} we conclude that
$\phi_1$ is a $({\bf c}, \HH)$-coloring, a contradiction. Similarly, if  $g_0^{-1}(v)$ contains at most ${\bf c}_2(v)$ odd vertices, then
$\phi_2$ is a $({\bf c}, \HH)$-coloring. So we have $\lambda(v,g_0)\geq {\bf c}_1(v)+{\bf c}_2(v)+2$, this contradicts Lemma~\ref{lem:SS} completing the proof of the theorem.

\section{Constructions }
In this section, we construct $(i,j)$-critical graphs with $i\geq 3, j\geq 2i+1$ that attain equality of the upper bound in Theorem~\ref{MT-F}.
We first define flags, which will be used to control the capacity of the vertices.
\begin{definition}[flags]
Given a vertex $v$, a {\em flag at $v$} is a graph containing $i+1$ many degree $2$ vertices $\{u_1,\dots,u_{i+1}\}$ and a degree $(i+2)$ vertex $x$, such that all of these vertices are adjacent to $v$, and $x$ is adjacent to all the vertices in $\{u_1,\dots,u_{i+1}\}$. See Figure~\ref{fig:flag}. $x$ is called the \emph{top vertex} in this flag, $v$ is the \emph{base vertex} of the flag, and $u_1,\dots,u_{i+1}$ are \emph{middle }vertices.

In the cover graph (we abbreviate 'the flag-induced cover graph' here by 'flag'), we say that a flag (with base vertex $v$, top vertex $x$, middle vertices $u_1,\dots, u_{i+1}$) is {\em parallel} if $p(x)\sim p(v), r(x)\sim r(v)$ and $u_t$ is even for every $t$; when $p(x)\sim r(v), r(x)\sim p(v)$ and $u_t$ is odd for every $t$, we call such flag a {\em twisted} flag.
\end{definition}
\begin{figure}[h]
    \centering
    \includegraphics[width=1.7in]{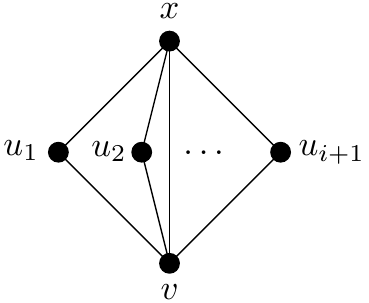}
\caption{A flag at vertex $v$.}\label{fig:flag}
\end{figure}
The following observation about flags is easy to check by hand.
\begin{claim}\label{clm0}
{\em Let $\HH=(H,L)$ a $2$-fold cover of a graph $G$ and
$F$ be a flag with base $v$ in $\HH$. Let
$\phi$ be a coloring of $H-(V(F)-v)$.
If $F$ is parallel and $\phi(v)=r(v)$, then in any extension of $\phi$ to $F$, $\phi(v)$ will have a neighbor in $\phi(F)$, and there is an extension in which $\phi(v)$ will have exactly one neighbor in $\phi(F)$. Similarly, if $F$ is twisted and $\phi(v)=p(v)$, then in any extension of $\phi$ to $F$, $\phi(v)$ will have a neighbor in $\phi(F)$, and there is an extension in which $\phi(v)$ will have exactly one neighbor in $\phi(F)$. In all other cases, we can extend $\phi$ to
$F$ so that  $\phi(v)$ will have no neighbors in $\phi(F)$.
}
\end{claim}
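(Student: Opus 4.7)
My plan is a short case analysis driven by the structural observation that each middle vertex $u_t$ has only the two neighbors $v,x$ in $F$, and the two cover vertices in $L(u_t)$ exhibit complementary adjacency patterns to $L(v)\cup L(x)$. Writing such a pattern as $(\alpha,\beta)\in\{p,r\}\times\{p,r\}$ for the $L(v)$- and $L(x)$-neighbor respectively, an even $u_t$ realizes the pair $\{(p,p),(r,r)\}$ and an odd $u_t$ realizes $\{(p,r),(r,p)\}$. For any choice of $\phi(x)$ and of $\phi(u_t)$, this lets me immediately read off how much $\phi(u_t)$ contributes to the degrees of $\phi(v)$ and of $\phi(x)$ in $H_\phi$. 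Throughout I use that the top and middle vertices of the flag are $(i,j)$-vertices in the construction, so in particular ${\bf c}_1(x)=i$ and ${\bf c}_2(x)=j\geq 2i+1$.

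The first step is the parallel case with $\phi(v)=r(v)$. If $\phi(x)=r(x)$, the parallel edge $r(x)\sim r(v)$ already makes $\phi(v)$ have a neighbor in $\phi(F)$. If instead $\phi(x)=p(x)$, then on each (even) $u_t$ the $(r,r)$-choice contributes $1$ to $\phi(v)$ and $0$ to $\phi(x)$, whereas the $(p,p)$-choice does the reverse. Since a valid extension must respect ${\bf c}_1(x)=i$ while there are $i+1$ middle vertices, at most $i$ of them may be $(p,p)$; hence at least one is $(r,r)$, giving $\phi(v)$ a neighbor in every valid extension. Picking exactly one $(r,r)$ and $i$ copies of $(p,p)$ then yields an extension with exactly one neighbor. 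The twisted case with $\phi(v)=p(v)$ is completely symmetric after swapping rich/poor and even/odd; the same bound ${\bf c}_1(x)=i$ both forces and controls the single neighbor.

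The ``all other cases'' part of the claim splits into four configurations: (i) parallel flag with $\phi(v)=p(v)$; (ii) twisted flag with $\phi(v)=r(v)$; (iii) $xv$-matching parallel but with at least one odd $u_{t_0}$; (iv) $xv$-matching twisted but with at least one even $u_{t_0}$. In each I choose $\phi(x)$ to be the vertex of $L(x)$ not adjacent to $\phi(v)$ under the $xv$-matching, and choose each $\phi(u_t)$ to be the vertex of $L(u_t)$ not adjacent to $\phi(v)$; this leaves $\phi(v)$ isolated in $\phi(F)$ by construction. The resulting contributions at $\phi(x)$ are absorbed either by ${\bf c}_2(x)=j\geq i+1$ whenever $\phi(x)$ ends up rich, or by the one unit of slack below ${\bf c}_1(x)=i$ created by the single wrong-parity $u_{t_0}$ whenever $\phi(x)$ ends up poor.

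The main obstacle is purely bookkeeping across the eight (flag-type, $\phi(v)$) configurations; no further machinery is invoked, and the numerical hypotheses enter only through ${\bf c}_1(x)=i$ and ${\bf c}_2(x)\geq i+1$. I would organize the write-up as one compact table recording, for each triple $(\phi(v),\phi(x),\text{parity of }u_t)$, the contribution of each choice of $\phi(u_t)$ to $\deg_{H_\phi}(\phi(v))$ and $\deg_{H_\phi}(\phi(x))$, from which all three parts of the claim read off directly.
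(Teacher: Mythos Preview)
The paper does not actually prove this claim: it states only that ``the following observation about flags is easy to check by hand'' and moves on. Your case analysis is correct and supplies exactly the verification the paper omits. The two essential mechanisms you identify --- that the poor capacity ${\bf c}_1(x)=i$ against $i{+}1$ middle vertices forces at least one neighbor of $\phi(v)$ in the parallel/$r(v)$ (and symmetrically twisted/$p(v)$) case, and that the rich capacity ${\bf c}_2(x)\geq i{+}1$ absorbs all middle vertices when $\phi(x)$ lands rich in the remaining cases --- are precisely what one checks by hand. Your cases (iii) and (iv), handling flags that are neither parallel nor twisted, are a careful reading of the phrase ``all other cases''; the paper's construction only ever instantiates parallel or twisted flags, so those cases are never used downstream, but your argument for them is sound. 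One minor remark: you never need to track whether the $(\alpha,\beta)$-vertex in $L(u_t)$ is poor or rich, since in every extension you build each $\phi(u_t)$ has degree at most $1$ in $H_\phi$ and $i\geq 1$; you might say this explicitly to preempt a reader's worry about the middle vertices' own capacity constraints.
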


 Hence, adding a parallel flag on a vertex $v$ essentially decreases ${\bf c}_2(v)$ by $1$, and adding a twisted flag on $v$ essentially decreases ${\bf c}_1(v)$ by $1$.

Given $m\geq1$, we now construct the graph $G_m$. When $m\geq2$, let $G_m$ be obtained from a path $v_1\dots v_m$, by adding $i+1$ flags to $v_1$, adding $i$ flags to $v_t$ for every $1<t<m$, and adding $i+j+1$ flags to $v_m$. When $m=1$, we define $G_1$ as a single base vertex with $i+j+2$ flags. See Figure~\ref{fig:ij}.
\begin{figure}[h]
    \centering
    \includegraphics[width=5.9in]{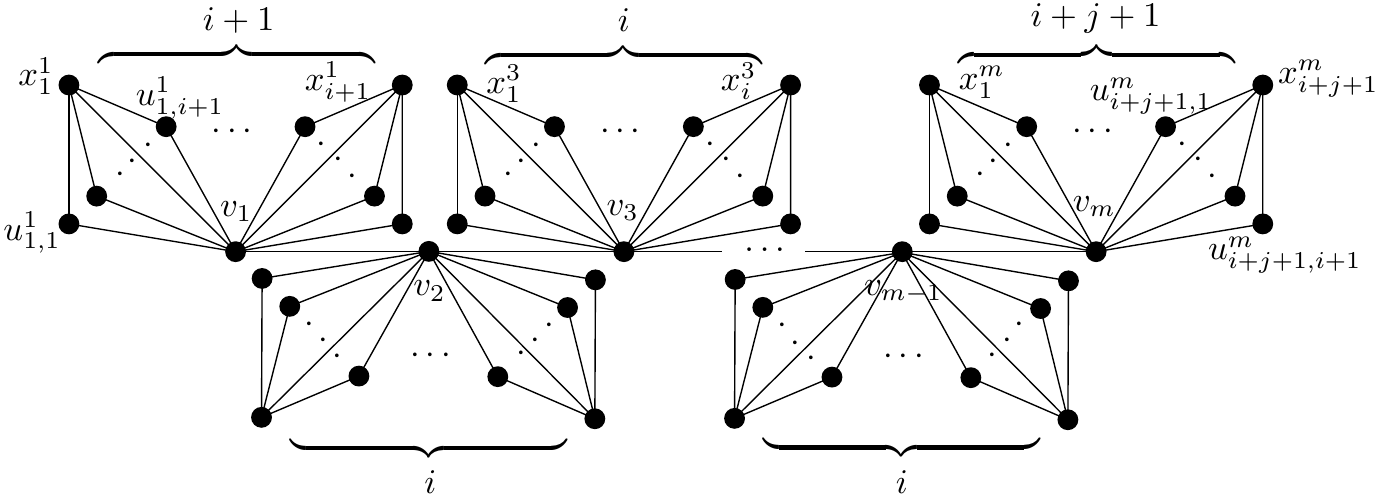}
\caption{Critical graphs $G_m$ for $(i,j)$-colorings.}\label{fig:ij}
\end{figure}

Note that for every $m\geq1$, $|V(G_m)|=(i+2)(mi+j+2)+m$ and $|E(G_m)|=(2i+3)(mi+j+2)+m-1$, thus
\[
|E(G_m)|=\frac{(2i+1)|V(G_m)|+j-i+1}{i+1}.
\]
\begin{proposition}
Let $i\geq1$, $j\geq 2i+1$ be integers. Then $G_m$ is $(i,j)$-critical for every $m$.
\end{proposition}
\begin{proof}
We first construct for each $m$ a $2$-fold cover $\HH_m = (L_m, H_m)$ of $G_m$, such that no $\HH_m$-map is an $\HH_m$-coloring.
When $m = 1$, let $i+1$ flags be twisted and the remaining $j+1$ flags be parallel. When $m\geq 2$, let $j$ flags based on $v_m$ be parallel, and the remaining flags in $G_m$ be twisted. For the path $v_1\cdots v_m$ in $H_m$, let $r(v_t)\sim p(v_{t+1})$ for $t = 1,\dots, m-2$, and $r(v_{m-1})\sim r(v_m)$.
Suppose $\phi$ is an $\HH_m$-coloring of $G_m$. By Claim~\ref{clm0}, $\phi(v_1) = r(v_1)$. Since $p(v_2)\sim r(v_1)$, and there are $i$ twisted flags based $v_2$, $\phi(v_2)$ has to be $r(v_2)$. Similarly, $\phi(v_t) = r(v_t)$ for all $t = 1,\dots,m-1$. Now since there are $i+1$ twisted flags based on $v_m$, by Claim~\ref{clm0}, $\phi(v_m)$ cannot be $p(v_m)$. But then again by Claim~\ref{clm0}, $\phi(v_m)$ has   $j$ neighbors from the parallel flags plus $\phi(v_{m-1})=r(v_{m-1})$ also is its  neighbor, a contradiction.

\medskip
We now show that every proper subgraph of $G_m$ is $(i,j)$-colorable. It suffices to show that $G_m-e$ is $(i,j)$-colorable for any $e\in E(G_m)$.
\begin{claim}\label{clm}
{\em Let $F$ be obtained by removing an edge $e$ from a flag with base $v$. Let $\HH = (L,H)$ be a $2$-fold cover of $F$. Then for each of the choices
$\phi(v) = p(v)$ and $\phi(v) = r(v)$,
there is an $\HH$-map $\phi$, such that  the degree of $\phi(v)$ in $H_\phi$ is  $0$.}

\medskip

\noindent\emph{Proof of Claim~\ref{clm}.}
Denote the top vertex by $x$. If $x\nsim v$, define $\phi(x) = r(x)$ and for each middle vertex $u$, define $\phi(u)$ so that $\phi(u)\nsim \phi(v)$. Then $\phi$ is a desired $\HH$-map.
Now assume $x\sim v$. Choose $\phi(x)\nsim \phi(v)$. If $e = xu_t$ for some middle vertex $u_t$, then let $\phi(u_t)\nsim \phi(v)$; if $e = u_tv$, let $\phi(u_t)\nsim \phi(x)$
 In either case, $\phi(u_t)$ is adjacent to neither $\phi(x)$ or $\phi(v)$. For the remaining middle vertices, choose $\phi(u_k)\nsim \phi(v), k\neq t$. There are only $i$ such $u_k$'s, thus $\phi$ is a desired $\HH$-map.
\end{claim}

Claim~\ref{clm} essentially says that removing an edge from a flag is `equivalent' (with respect to coloring) to removing the whole flag. Hence $G_1-e$ contains either at most $i$ twisted flags, or at most $j$ parallel flags. In either case $G_1-e$ is colorable.

Let $m\geq 2$ and $\HH = (L,H)$ be a $2$-fold cover of $G_m-e$. We will construct an $\HH$-map $\phi$.
If for a cover $\HH' = (L',H')$ of $G_m$,  there are at most $i$ twisted flags on $v_m$ in $H'$, then we can define an $\HH'$-map $\phi'$ of $G_m$ by $\phi'(v_m) = p(v_m)$ and $\phi'(v_k) = r(v_k)$ for all $k\neq m$.
Since all the possible neighbors of  $\phi'(v_m)$ in $H'_{\phi'}$ will be from the twisted flags based on $v_m$, the degree of $\phi'(v_m)$ will not exceed $i$. For $k = 1,\dots, m-1$, since $j> i+1$, the degree of $\phi'(v_k)$ will not exceed $j$.
Hence
\begin{equation}\label{blue}
\mbox{\em
we  consider only covers   of $G_m-e$ with at least $i+1$ twisted flags on $v_m$.}
\end{equation}

{\bf Case 1:} $e$ is belongs  to some flag $F$. If $F$ is based on $v_m$, then by~\eqref{blue}  there are at most $j-1$ parallel flags on $v_m$. Let $\phi(v_k) = r(v_k)$ for each $k$.
Then by Claims~\ref{clm0} and~\ref{clm}, we can extend $\phi$ to each of the flags so that the degree of $\phi(v_k)$ in $H'_\phi$ will be at most $j$ for each $k$.

If $F$ is based on $v_t$ for some $t\neq m$, then there are at most $i-1$ twisted or parallel flags based on $v_t$ when $t>1$ and at most $i$ such
flags based on $v_t$ when $t=1$. Let $\phi(v_k) = r(v_k)$ for $k \in \{ 1,\dots, t-1,m\}$, and $\phi(v_l) = p(v_l)$ for $l = t,\dots, m-1$.
Again by Claims~\ref{clm0} and~\ref{clm}, we can extend $\phi$ to each of the flags so that
for all $k$, the degree of $\phi(v_k)$ in $H'_\phi$ is at most $i+1<j$, for each $k = 1,\dots, t-1$, and the degree of $\phi(v_k)$ in $H'_\phi$ is at most $i$ for each $k = t,\dots, m-1$.  Moreover, by~\eqref{blue} we can provide that the degree of $\phi(v_m)$ in $H'_\phi$ is at most $j$.
Thus in all cases, $\phi$ can be extended to an $\HH$-coloring.

{\bf Case 2:}
 $e = v_tv_{t+1}$ for some $t \in \{1,\dots, m-1\}$.  Let $\phi(v_k) = r(v_k)$, for each $k \in \{1,\dots, t,m\}$,  and $\phi(v_k) = p(v_k)$ for each $k = t+1,\dots, m-1$. Similarly to Case 1, $\phi$ again can be extended to an $\HH$-coloring of $G_m-e$.
\end{proof}

\end{document}